\newcommand{\reals}{\mathds{R}}
\newcommand{\naturals}{\mathds{N}}
\newcommand{\setsep}{\;|\;}
\newcommand{\ddd}{\,\mathrm{d}}
\DeclareMathOperator{\diam}{diam}
\newcommand{\dist}{\ensuremath{\operatorname{dist}}}
\newcommand{\ind}[1]{\mathds{1}_{#1}}
\DeclareMathOperator{\Div}{div}
\DeclareMathOperator{\Lip}{Lip}
\DeclareMathOperator*{\essinf}{ess\,inf}
\DeclareMathOperator*{\esssup}{ess\,sup}
\newcommand{\ve}[1]{\mathbf{#1}}
\newcommand{\vx}{\ve{x}}
\newcommand{\vy}{\ve{y}}
\newcommand{\vn}{\ve{n}}
\newcommand{\vm}{\ve{m}}
\newcommand{\vu}{\ve{u}}
\newcommand{\va}{\ve{a}}
\newcommand{\vb}{\ve{b}}
\newcommand{\vf}{\ve{f}}
\newcommand{\vphi}{\boldsymbol{\varphi}}
\def\softd{{\leavevmode\setbox1=\hbox{d}%
		\hbox to 1.05\wd1{d\kern-0.4ex{\char039}\hss}}}
\newcommand{\mat}[1]{\mathbf{#1}}
\newcommand{\matA}{\mat{A}}
\newcommand{\Dt}{\partial_t}
\newcommand{\jump}[1]{\left\llbracket#1\right\rrbracket}
\newcommand{\tildejump}[1]{\widetilde{\jump{#1}}}
\newcommand{\ava}[1]{\ensuremath{\hspace{-0.4mm}\left\{\!\!\left\{#1\right\}\!\!\right\}\hspace{-0.4mm}}}                                      
\newcommand{\fac}{\ensuremath{\mathcal{E}}}
\newcommand{\inFac}{\fac^\text{int}}
\newcommand{\exFac}{\fac^\text{ext}}
\newcommand{\sigsum}{\ensuremath{\sum_{\sigma \in \mathcal{E}_\text{int} }}}
\DeclareMathOperator{\UP}{UP}
\DeclareMathOperator{\conv}{conv}
\newcommand{\Qh}{\mathcal{Q}_h}
\newcommand{\flux}{\ensuremath{F_n^\text{up}}}
\DeclareMathOperator{\len}{length}
\definecolor{jgured}{rgb}{0.754,0.0,0.164}
\newcommand{\cb}{}
\begin{document}
	
	\title{Random compressible Euler flows}
	
	\author{M.~Luk\'a\v{c}ov\'a-Medvi{\softd}ov\'a}
	
	\address{Institute of Mathematics\\
		Johannes Gutenberg University Mainz, Germany\\
		E-mail: lukacova@uni-mainz.de\\
		www.numerik.mathematik.uni-mainz.de}
	
	\author{S. Schneider}
	
	\address{Institute of Mathematics\\
		Johannes Gutenberg University Mainz, Germany\\
		E-mail: simon.schneider@uni-mainz.de}

	\begin{abstract}
			We propose a finite volume stochastic collocation method for the random Euler system. We rigorously prove the convergence of random finite volume solutions under the assumption that the discrete differential quotients remain bounded in probability. Convergence analysis combines results on the convergence of a deterministic FV method with stochastic compactness arguments due to Skorokhod and Gy\"ongy-Krylov.  
	\end{abstract}
	
	\keywords{random compressible flows, Euler system of gas dynamics, convergence in probability}
	
	\bodymatter
	
	\section{Introduction}
	The random compressible Euler equations of gas dynamics arise in many practical applications, such as meteorology, physics, engineering or medicine. In practice, model data are typically uncertain since they arise from measurements and may be influenced by various errors. Consequently, data uncertainties propagate and lead to a random PDE system. Several numerical methods have been developed in the literature to approximate random  PDE equations.  
	
	The Monte Carlo method is often used, but may be very expensive due to its slow convergence
	and large number of required samples. Alternatively, stochastic spectral methods, such as the
	stochastic Galerkin and stochastic collocation methods, are used to approximate random PDE systems efficiently. While the stochastic Galerkin
	method is intrusive, the stochastic collocation method is nonintrusive. It only requires the application of a deterministic numerical scheme 
	at certain collocation nodes.  We refer to the monographs by Le Maître and Knio \cite{Knio}, Pettersson et al. \cite{Pettersson}, Xiu \cite{Xiu}, 
	Zhang and Karniadakis \cite{Zhang}.
	
	Rigorous convergence analysis of these uncertainty quantification methods typically requires 
	uniqueness and continuous dependence of solutions on random parameters. However, continuity with respect to 
	the random parameter may be a rather strong assumption for hyperbolic problems, and we only require Borel measurability of the data $\rightarrow$ solution mapping. 
	Convergence of the stochastic collocation method for random elliptic and parabolic equations was studied, e.g., in  Babu\v{s}ka et al. \cite{Babuska}, 
	Nobile et al. \cite{Nobile}, Tang and Zhou \cite{Tang}. 

 In this paper, we study a stochastic collocation finite volume method applied to the \textit{random compressible Euler system}. As shown in our recent work Chertock et al.~\cite{CHKL} global statistical spectral methods may not be suitable for random hyperbolic conservation laws since discontinuities usually propagates also in the random direction. Global interpolation methods then yield oscillations on discontinuities due to the Gibbs phenomenon.  Therefore, we use a stochastic collocation method that works with a piecewise continuous approximation in the deterministic and the random space. We aim to rigorously prove the convergence of the stochastic collocation finite volume method. To this end, we combine deterministic analysis of a finite volume method with stochastic compactness arguments. We refer to our recent work Feireisl and Luk\'a\v{c}ov\'a~\cite{Feireisl2023}, where similar arguments have been used for the random Navier-Stokes equations, see also Feireisl et al.~\cite{MC1, MC2} for the error analysis of the Monte Carlo finite volume method. 

We start with a deterministic model.
The Euler equations of gas dynamics describe  the conservation of mass, momentum and energy
	\begin{align}\label{eq:EulerSystem}
	\partial_t \varrho + \Div (\vm) &= 0,
	\notag\\
	\partial_t \vm + \Div_x \left( \frac{\vm \otimes \vm}{\varrho} \right) + \nabla p &= 0,
	\notag\\
	\partial_t E + \Div ((E+p)\vu) &= 0.
\end{align}
The dependent variables $\varrho$, $\vm$ and $E$ denote the density, momentum and energy of the fluid, respectively.
The pressure $p$, temperature $\vartheta$, velocity $\vu$, internal energy $e$ and entropy $s$ are given by
\begin{equation*}
	p = \varrho \vartheta = (\gamma -1)\varrho e,\
	\vu = \vm/\varrho,\
	e = \varrho^{-1}(E-|\vm|^2/(2\varrho)),\
	s = \log(\vartheta^{c_v}/\varrho).
\end{equation*}
The adiabatic coefficient is $\gamma\in (1,\infty)$ and the heat capacity at constant volume is $c_v = (\gamma-1)^{-1}.$ 
 System \eqref{eq:EulerSystem} is considered on a time-space cylinder $ (0,T) \times Q$, where $T>0$ is a final time and $Q \subset \Bbb R^d$, $d=2,3$, a physical domain. 
 System (\ref{eq:EulerSystem}) is equipped with the initial data $(\varrho_0, \vm_0, E_0)$ and  impermeability boundary condition $\vu \cdot \vn = 0$ on $(0,T) \times \partial Q.$

We say that $(\varrho, \, \vm, \,E) \in C^1([0,T]\times \overline{Q}, \reals^{d+2})$ is a classical solution of (\ref{eq:EulerSystem}) if (\ref{eq:EulerSystem}) is satisfied pointwise. 
We will work with the following notion of \emph{generalized weak solution}.

\begin{definition}  [Weak solution]
	We call a tuple $(\varrho, \, \vm, \,E) \in $ \linebreak 
$L^\infty((0,T); L^\gamma(Q) \times L^{2 \gamma/(\gamma+1)}(Q;\, \reals^d)\times L^1(Q)) $ a \textit{weak solution} of (\ref{eq:EulerSystem}) with initial data $(\varrho_0, \, \vm_0, \,E_0) \in L^\gamma(Q) \times L^{2 \gamma/(\gamma+1)}(Q;\, \reals^d)\times L^1(Q)$\\
$$
\varrho_0 > 0, \ E_0 - \frac{1}{2} \frac{| \vm_0|^2 }{\varrho_0} > 0\ \mbox{ for  a.a. } \vx \in Q 
$$
	if 
$$
\varrho > 0,\   E - \frac{1}{2} \frac{| \vm|^2 }{\varrho} > 0  \mbox{ a.e.   in }  [0,T]\times Q
$$ and
	\begin{itemize}
		\item for every $\varphi \in C^\infty([0,T]\times \overline{Q})$ and every $\tau \in [0,T]$
		\begin{equation*}
			\left[\int_Q \varrho \varphi \ddd \vx \right]_{t=0}^{t=\tau} = \int_{[0,\tau]\times Q} \varrho\,\partial_t \varphi + \vm\cdot \nabla \varphi\, \ddd (t,\vx);
		\end{equation*}
		
		\item for every $\vphi \in C^\infty([0,T]\times \overline{Q}, \reals^d)$ with $\vphi\cdot \vn = 0$ a.e. on $[0,T]\times \partial Q$ and every $\tau \in [0,T]$
		\begin{align*}
			&\left[\int_Q \vm \vphi \ddd \vx \right]_{t=0}^{t=\tau} 
			\\
			&
			= \int_{[0,\tau]\times Q} \vm\,\partial_t \vphi + \frac{\vm \otimes \vm}{\varrho} : \nabla \vphi + p \Div_x \vphi \, \ddd (t,\vx),
		\end{align*}
		where $p=p(\varrho, \vm, E) = (\gamma -1) \left(E - \frac{1}{2} \frac{| \vm|^2 }{\varrho} \right)$;
		\item for every $\varphi \in C^\infty([0,T]\times \overline{Q})$ with $\varphi\geq 0$, any $\chi\in C^1(\reals)$ nondecreasing, concave and bounded from above and every $\tau \in [0,T]$
		\begin{equation*}
			\left[\int_Q \varrho \chi(s)\varphi \ddd \vx \right]_{t=0}^{t=\tau} \geq \int_{[0,\tau]\times Q} \varrho \chi(s) \,\partial_t \varphi + \varrho \chi(s) \vu \cdot \nabla \varphi\, \ddd (t,\vx),
		\end{equation*}
		where $s=s(\varrho, \vm, E) = \frac{1}{\gamma - 1}\log \left((\gamma - 1) \frac{1}{\varrho}\left( E - \frac{1}{2} \frac{| \vm|^2 }{\varrho}   \right) \right) - \log (\varrho);$
		\item for every $\tau \in [0,T]$
		\begin{equation*}
			\left[\int_Q E \ddd \vx \right]_{t=0}^{t=\tau} \leq 0.
		\end{equation*}
	\end{itemize}
\end{definition} 
Note that this notion of weak solution matches the consistency formulation of the viscous finite volume scheme, cf. \cite[Theorem 10.3]{feireisl2021Book}. 
A 
Lipschitz continuous tuple $(\varrho, \, \vm, \,E)\colon [0,T] \times \overline{Q} \to \reals^{d+2}$ which satisfies (\ref{eq:EulerSystem}) pointwise 
almost everywhere is called a \textit{strong solution} of (\ref{eq:EulerSystem}). 
It is straightforward to check that every Lipschitz continuous weak solution with strictly positive density and temperature is in fact a strong solution. Moreover, the strong solution is unique.
	
	\begin{theorem}[Uniqueness of strong solutions]\label{lem:uniquenessStrongSolutions}
		If $(\varrho_1, \, \vm_1, \,E_1)$ and $(\varrho_2, \, \vm_2, \,E_2)$ are strong solutions of (\ref{eq:EulerSystem}) with the same initial data, then $(\varrho_1, \, \vm_1, \,E_1)=(\varrho_2, \, \vm_2, \,E_2)$.
	\end{theorem}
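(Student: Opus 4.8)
The plan is to run the \emph{relative energy} (relative entropy) argument. Because a strong solution is, by definition, Lipschitz on the compact cylinder $[0,T]\times\overline Q$ with density and temperature bounded away from $0$ and $\infty$, the ranges of both solutions $\vU_1:=(\varrho_1,\vm_1,E_1)$ and $\vU_2:=(\varrho_2,\vm_2,E_2)$ lie in a fixed compact convex set $K\subset\{\,(\varrho,\vm,E):\ \varrho>0,\ E-|\vm|^2/(2\varrho)>0\,\}$ on which the fluxes $\vf_1,\dots,\vf_d$ of \eqref{eq:EulerSystem} are smooth and on which the mathematical entropy $\eta:=-\varrho s$ is a \emph{uniformly convex} entropy, with entropy flux $\vq:=-\varrho s\,\vu$. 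A preliminary observation is that each strong solution satisfies the companion balance with \emph{equality}: writing the system as $\Dt\vU+\sum_j\partial_{x_j}\vf_j(\vU)=0$ a.e., the chain rule for Lipschitz maps composed with the smooth pair $(\eta,\vq)$ gives $\Dt\eta(\vU)+\Div\vq(\vU)=D\eta(\vU)\cdot\big(\Dt\vU+\sum_j\partial_{x_j}\vf_j(\vU)\big)=0$ a.e., by the compatibility $D q_j=D\eta\,D f_j$; since $\vU$ is Lipschitz this identity holds in the sense of distributions, and on $\partial Q$ the normal components of $\vf_j$ and of $\vq$ vanish by impermeability $\vu\cdot\vn=0$.

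Next I would introduce the relative entropy
$$
\mathcal H(\vU_1\mid\vU_2):=\eta(\vU_1)-\eta(\vU_2)-D\eta(\vU_2)\cdot(\vU_1-\vU_2),
$$
so that uniform convexity of $\eta$ on $K$ yields $c\,|\vU_1-\vU_2|^2\le\mathcal H(\vU_1\mid\vU_2)\le C\,|\vU_1-\vU_2|^2$ with constants depending only on $K$. Since both solutions are strong, all identities hold pointwise a.e.; hence one may multiply the $\vU_1$-equation by the Lipschitz function $D\eta(\vU_2)$, add the analogous contributions coming from the $\vU_2$-equation and the two entropy equalities, and integrate over $Q$, integrating by parts in $\vx$ and dropping the lateral boundary integrals (which vanish by $\vu_i\cdot\vn=0$). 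Every term that is linear in $\vU_1-\vU_2$ then cancels, leaving a ``relative-flux'' remainder that, on the compact set $K$ and using the Lipschitz bounds on $\vU_1,\vU_2$, is pointwise dominated by $C\big(\|\vU_1\|_{\Lip},\|\vU_2\|_{\Lip}\big)\,|\vU_1-\vU_2|^2$. This gives the differential inequality
$$
\frac{\mathrm d}{\mathrm d t}\int_Q\mathcal H(\vU_1\mid\vU_2)\,\ddd\vx\;\le\; C\int_Q\mathcal H(\vU_1\mid\vU_2)\,\ddd\vx .
$$

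Since $\vU_1(0,\cdot)=\vU_2(0,\cdot)$, the quantity $\int_Q\mathcal H(\vU_1\mid\vU_2)\,\ddd\vx$ vanishes at $t=0$; Grönwall's lemma then forces it to vanish for all $t\in[0,T]$, and the lower bound $c|\vU_1-\vU_2|^2\le\mathcal H$ gives $\vU_1\equiv\vU_2$ on $[0,T]\times\overline Q$, i.e.\ $(\varrho_1,\vm_1,E_1)=(\varrho_2,\vm_2,E_2)$. (Equivalently, one may symmetrise \eqref{eq:EulerSystem} in the entropy variables $D\eta(\vU)$ and run a direct $L^2$ energy estimate on the difference, which is the same computation in disguise.) The main obstacle is the algebra behind the cancellation in the middle step: one has to check that, thanks to the specific structure of the Euler entropy pair and the pressure law $p=(\gamma-1)(E-|\vm|^2/(2\varrho))$, the time derivative of $\mathcal H$ really reduces — after the linear contributions disappear — to remainder terms quadratic in $\vU_1-\vU_2$ with coefficients controlled by the Lipschitz norms; a secondary, more technical point is to justify rigorously that for merely Lipschitz solutions the chain rule, the spatial integration by parts, and the fundamental theorem of calculus in $t$ are all legitimate and that the lateral boundary terms genuinely drop out.
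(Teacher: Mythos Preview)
Your proposal is correct and follows essentially the same route as the paper: the paper's proof simply invokes Theorem~6.2 of Ref.~\citenum{feireisl2021Book}, which is the weak--strong uniqueness result for the Euler system proved precisely via the relative energy (relative entropy) inequality that you spell out. The one point the paper singles out and you only mention in passing is that, because both solutions here are Lipschitz, each can serve as a test function in the weak formulation of the other, so the integration-by-parts and boundary handling go through without the $C^2$-boundary hypothesis needed in the book's version---this is exactly your ``secondary, more technical point'' and is worth stating explicitly.
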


	\begin{proof}
		Only straightforward changes of the proof of Theorem 6.2 in Ref.~\citenum{feireisl2021Book} are necessary. Note that strong solutions satisfy the equations almost everywhere. Thus, the class of admissible test functions in the weak formulation is large enough to enable testing with another strong solution. Therefore we are not restricted to domains $Q$ with boundary of class $C^2$.
	\end{proof}

\section{Discretization of the deterministic problem}
We proceed with the description of a numerical method for the deterministic Euler equations \eqref{eq:EulerSystem} and assume that the domain $Q \subset \Bbb R^d$ is open, connected, bounded and has a Lipschitz boundary.

\subsection{The viscous finite volume method}

For the space discretization, we use the numerical scheme introduced in Ref.~\citenum{feireisl2020finite}, now referred to as \textit{viscous finite volume (VFV) method}\cite{feireisl2021Book}. The following standard notation is used.

We will consider a sequence of meshes  
with increasing mesh resolution.
For simplicity, we assume that each mesh consists of shape-regular triangles in 2D and tetrahedra in 3d.
We roughly follow the space discretization presented in \cite[Section 7.2.1]{feireisl2016book}. The set of all elements (triangles or tetrahedra) of the $n$-th triangulation is denoted by $\mathcal{T}_n$.
For  $K\in \mathcal{T}_n$ the set of boundary elements of $K$ is denoted \fac(K).
The set of all boundary elements of the $n$-th mesh is denoted as $\fac_n = \cup \{\fac(K) \setsep K\in \mathcal{T}_n\}$. The exterior boundary elements are given by $\exFac_{n} = \{ \sigma \in \fac_n \setsep \sigma \subset \partial Q \}$. Likewise, the set of all interior boundary elements is given by $\inFac_{n} =\fac_n \setminus \exFac_{n}$. We assume that there are no hanging nodes, i.e. for each $\sigma \in \inFac_{n}$ there are exactly two elements $K,L\in \mathcal{T}_n$ such that $\sigma\in \fac(K) \cap \fac(L)$. 
Each boundary element $\sigma \in \inFac_{n}$ is associated with an arbitrary fixed normal vector $\vn=\vn_\sigma$. For $\sigma \in \exFac_{n}$ we fix $\vn$ to denote the outer normal unit vector of $Q$.

Further, we assume that
there exist constants $0<c_1\leq C_1$, $0<c_2\leq C_2$ such that for all $K\in\mathcal{T}_n,\, \sigma\in \fac_n,  n \in \naturals$
\begin{equation*}
	c_1 h_n^d \leq |K| \leq C_1 h_n^d, 
	\qquad c_2 h_n^{d-1} \leq |\sigma| \leq C_2 h_n^{d-1}.
\end{equation*}
Here, $h_n \coloneqq \max_{K\in\mathcal{T}_n} \diam(K)$ is assumed to vanish for $n\to\infty$.
We also assume that computational domain $Q_n = (\cup\mathcal{T}_n )^\circ$ coincides with the physical domain $Q$ for all $n\in \naturals$.
Additionally, following \cite[Definition 1]{feireisl2021Book}, we suppose that there exist control points $\vx_K$ associated to each $K\in\mathcal{T}_n$ such that for any $\sigma = K\cap L \in \inFac_{n}$, $K,L\in\mathcal{T}_n$, $\vx_K-\vx_L = \alpha \vn_\sigma$ with $\alpha \in \reals$.

We denote  by $Q_n(Q, \reals^k)$ the corresponding set of all piecewise constant functions and set
$Q_n(Q) \coloneqq Q_n(Q, \reals)$. 
We define the value of $\va_n\in \mathcal{Q}(Q;\reals^k)$ on $K\in\mathcal{T}$ by piecewise constant projection
\begin{equation*}
	\va_n^K = \frac{1}{|K|} \int_K \va(\vy) \ddd \vy.
\end{equation*} 
Further, for $\sigma\in\fac_n$ and $\va_n\in Q_n(Q, \reals^k)$ we define 
the average and jump of $\va_n$ in $\vx\in\sigma\in \fac_n$ by
\begin{equation*}
	\ava{\va_n}= \frac12 \left(\va_n^\text{out} + \va_n^\text{in} \right),
	\qquad
	\jump{\va_n} = \va_n^\text{out}- \va_n^\text{in},
\end{equation*}
respectively. Here $\va_n^\text{out,in}$ are the outward/inner limits in the normal direction $\vn_\sigma$.
The test functions in the momentum equation will be taken from the following subspace of piecewise constant functions
\begin{equation*}
	\mathcal{D}_n(\vm )\coloneqq \left\{ \vphi \in Q_n(Q, \reals^d)\setsep \vphi^\text{in} \cdot \vn_\sigma = 0  \text{ on all }\sigma \in \exFac_{n}  \right\}.
\end{equation*}
The discretization of the convective terms is based on the upwind flux
\begin{equation*}
	\UP[\va_n, \vu_n] \coloneqq \va_n^\text{in} [\ava{\vu_n}\cdot \vn]^+ + \va_n^\text{out} [\ava{\vu_n}\cdot \vn]^- .
\end{equation*}
Here, $[\cdot]^+$, $[\cdot]^-$ denote the positive and negative parts, respectively. The numerical flux function for the convective terms is augmented by an additional artificial viscosity term,
\begin{equation*}
	\flux[a_n, \vu_n] \coloneqq \UP[a_n, \vu_n] - h_n^\epsilon \jump{a_n}, \qquad \epsilon > -1.
\end{equation*}	
For a given $r\in Q_n$ we define
$
\tildejump{r_n} \coloneqq r_n^\text{up} - r_n^\text{down},
$
where $r_n^\text{up}$, $r_n^\text{down}$ are the upwind, downwind values of $r^n$ on the cell interface $\sigma.$
There exists $M\geq 0$ such that for all nonnegative $r\in \mathcal{Q}_n$
\begin{equation}\label{eq:avaBoundByL1Norm}
	h\sigsum \int_\sigma \ava{r} \ddd S(\vx)
	\leq
	M \int_Q r \ddd \vx.
\end{equation}

The semidiscrete VFV method \cite{feireisl2020finite, feireisl2021Book} can be formulated in the following way 
\begin{align} \label{eq:detDiscretization}
	&\int_{Q}\Dt \varrho_n \varphi \ddd \vx 
	- \sigsum\int_{\sigma} \flux[\varrho_n, \vu_n]\jump{\varphi} \ddd S= 0,
	\notag \\
    & \mbox{ for every  } \varphi \in \mathcal{Q}_n(Q);
    \notag \\[3mm]
	&\int_{Q} \Dt \vm_n \cdot  \vphi  \ddd \vx 
	-\sigsum \int_{\sigma} \flux[\vm_n, \vu_n] \cdot \jump{ \vphi} \ddd S
	\notag \\
	&
	+ \sigsum \int_{\sigma} h^{\alpha-1} {\jump{\vu_n}} \cdot \jump{ \vphi} \ddd S
	- \sigsum\int_{\sigma} \ava{p_n} \vn \cdot \jump{\vphi} \ddd S  = 0,
	\notag \\
& \mbox{ for every } \vphi \in \mathcal{D}_{n}(\vm), \quad  \vm_n^\text{in }\cdot \vn_\sigma = 0 \quad \text{ for all }\sigma \in \exFac_n;
\notag	\\[3mm]
	&\int_{Q} \Dt E_n \varphi \ddd \vx 
	-\sigsum \int_{\sigma} \flux[E_n, \vu_n]\jump{\varphi} \ddd S
	+
	h^{\alpha -1}\!\!
	\sigsum\int_\sigma \jump{\frac{\vu_n^2}2}\jump{\varphi}
	\ddd S
	\notag
	\\
	&
	-\sigsum \int_{\sigma}
	\bigg(
	\ava{p_n }\jump{\vu_n \varphi} - \ava{p_n \varphi }\jump{\vu_n}
	\bigg) 
	\cdot \vn \ddd S
	=
	0  \notag \\
   & \mbox{ for every  } \varphi \in \mathcal{Q}_n(Q).
\end{align}

We will require the following regularity of the discrete initial data.
	\begin{definition}[Admissible deterministic discrete initial data]\label{def:admissInitData}
		We say that the initial data $(\varrho_{0,n},\vm_{0,n},E_{0,n}) \in \mathcal{Q}_n([0,T]\times Q; \reals^{d+2})$ are admissible if the following holds:
		\begin{enumerate}[label=(\roman*)]
			\item $\liminf_n \essinf_{\vx\in Q}\varrho_{0,n}(\vx) > 0$ and $E_{0,n}(\vx)>\!|\vm_{0,n}(\vx)|^2/({2 \varrho_{0,n}(\vx)})$ for almost all $\vx\in Q$ and for all $n\in\naturals$,
			
			\item $\limsup_n \esssup_{x\in Q} E_{0,n}(\vx)-|\vm_{0,n}(\vx)|^2/({2 \varrho_{0,n}(\vx)}) < \infty,$
			
			\item There exists a constant $\underline{s}\in\reals$ such that $\liminf_n\essinf_{\vx} s_{0,n}(\vx)\geq \underline{s}$,

          \item
          $(\varrho_{0,n}, \vm_{0,n}, E_{0,n}) \to (\varrho_{0}, \vm_{0}, E_{0})$  in $L^\gamma(Q) \times L^{\frac{2 \gamma}{\gamma +1}}(Q; \reals^d) \times L^1(Q)$ for $n\to\infty$.
			
		\end{enumerate}
	\end{definition}

	We sum up several properties of the viscous finite volume scheme proven in Chapter 10 of Ref.~\citenum{feireisl2021Book}. For convenience, we denote the initial energy by $\mathcal{E}_{0,n} \coloneqq \int_Q E_{0,n}(\vx) \ddd \vx$.

	\begin{theorem} \label{thm:propertiesDetScheme}If the initial data $(\varrho_{0,n},\vm_{0,n},E_{0,n})$ are admissible then the VFV method (\ref{eq:detDiscretization}) has the following properties:
		\begin{enumerate}[label=(\roman*)]
			\item (Existence of approximate solutions) There exist $(\varrho_{n}, \vm_{n}, E_{n})$ satisfying (\ref{eq:detDiscretization}) for all $t\in[0,\infty)$;
			\label{thm:propDetScheme_existence}
			\item (Positivity of the density and temperature)
			\label{thm:propDetScheme_positivity} $\varrho_n,\vartheta_n>0$;
			\item (Minimal entropy principle) $s_n(t) \geq \underline{s}$;
			\label{thm:propDetScheme_minEntropy}
			\item (Stability) \label{thm:propDetScheme_stability}
			\begin{enumerate}
				\item
				$\|\varrho_n\|_{L^\infty([0,T] ;\,L^\gamma(Q))} \leq (\gamma-1)\exp\Big(-(\gamma - 1)\inf_Q s_0\Big) \mathcal{E}_{0,n}$,
				\item 
				$\|\vm_n\|_{L^\infty([0,T] ;\,L^{2\gamma/(\gamma+1)}(Q))} 
				\leq 
				\sqrt[\gamma+1]{(\gamma-1)\exp(-(\gamma-1)\underline{s})} \mathcal{E}_{0,n}$,
				\item
				$\|E_n\|_{L^\infty([0,T]; L^1(Q))} = \mathcal{E}_{0,n}$.
			\end{enumerate}
		\end{enumerate}
	\end{theorem}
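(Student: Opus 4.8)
The plan is to reconstruct the argument of Chapter~10 of Ref.~\citenum{feireisl2021Book}. Identifying elements of $\mathcal{Q}_n(Q;\reals^{d+2})$ with their vectors of cell averages, the semidiscrete scheme \eqref{eq:detDiscretization} becomes an autonomous ODE system for $(\varrho_n^K,\vm_n^K,E_n^K)_{K\in\mathcal{T}_n}$. On the open set where every $\varrho_n^K>0$ and every internal energy $E_n^K-|\vm_n^K|^2/(2\varrho_n^K)$ is positive, the velocity $\vu_n=\vm_n/\varrho_n$ and the pressure $p_n=(\gamma-1)(E_n-|\vm_n|^2/(2\varrho_n))$ are rational, hence locally Lipschitz, functions of the unknowns, so Picard--Lindel\"of gives a unique maximal solution on an interval $[0,T^*)$. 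To obtain \ref{thm:propDetScheme_existence} it then suffices to show that the bounds proved below hold on $[0,T^*)$ and keep the solution in a compact subset of this set; the standard continuation argument for ODEs then forces $T^*=\infty$.

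For \ref{thm:propDetScheme_positivity} I would treat the density first. Since $\flux[\cdot,\vu_n]=\UP[\cdot,\vu_n]-h_n^\epsilon\jump{\cdot}$ is a monotone numerical flux — nondecreasing in the upwind value, nonincreasing in the downwind value — the first line of \eqref{eq:detDiscretization} is a monotone semidiscrete scheme for the transport of $\varrho_n$ by the frozen velocity $\vu_n$, so it keeps the density positive: $\varrho_n(t)>0$ on $[0,T^*)$ because $\varrho_{0,n}>0$. Positivity of the temperature, equivalently of the internal energy $\varrho_n e_n$, and the minimal entropy principle \ref{thm:propDetScheme_minEntropy} come from the discrete internal-energy balance: combining the momentum equation tested with $\vu_n$ with the mass equation tested with $|\vu_n|^2/2$ gives a discrete kinetic-energy balance, and subtracting it from the total-energy equation yields an internal-energy balance whose numerical-dissipation terms — the upwind jumps and the artificial-viscosity contributions $h^{\alpha-1}\jump{\vu_n}$ and $h^{\alpha-1}\jump{\vu_n^2/2}$ — carry the favourable sign. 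By the same monotonicity argument as for the density this balance preserves positivity of $\varrho_n e_n$, hence $\vartheta_n>0$; with $\varrho_n,\vartheta_n>0$ the entropy $s_n$ is well defined, and renormalizing the discrete entropy balance with a nondecreasing, concave $\chi$ bounded from above shows that $t\mapsto\int_Q\varrho_n\chi(s_n)\ddd\vx$ is non-decreasing. Taking a family of such $\chi$ approximating $s\mapsto\min\{s,\underline{s}\}$ and using (iii) of Definition~\ref{def:admissInitData} gives $s_n(t)\ge\underline{s}$, which completes \ref{thm:propDetScheme_positivity} and \ref{thm:propDetScheme_minEntropy}.

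The stability estimates \ref{thm:propDetScheme_stability} follow once $\varrho_n,\vartheta_n>0$ and $s_n\ge\underline{s}$ are available. Choosing $\varphi\equiv 1$ in the energy equation of \eqref{eq:detDiscretization}, every interior-face term drops out because $\jump{1}=0$, so $\tfrac{\ddd}{\ddd t}\int_Q E_n\ddd\vx=0$; since $E_n=\tfrac12|\vm_n|^2/\varrho_n+\varrho_n e_n>0$, this yields $\|E_n\|_{L^\infty([0,T];L^1(Q))}=\int_Q E_{0,n}\ddd\vx=\mathcal{E}_{0,n}$, i.e.\ (c). The bound $s_n\ge\underline{s}$ reads $\vartheta_n^{c_v}\ge e^{\underline{s}}\varrho_n$, hence $p_n=\varrho_n\vartheta_n\ge e^{(\gamma-1)\underline{s}}\varrho_n^\gamma$, and since $\varrho_n e_n=p_n/(\gamma-1)$ we get the pointwise inequality $\varrho_n^\gamma\le(\gamma-1)e^{-(\gamma-1)\underline{s}}\varrho_n e_n\le(\gamma-1)e^{-(\gamma-1)\underline{s}}E_n$; integrating over $Q$ and invoking (c) gives (a). For (b) one writes $|\vm_n|^{2\gamma/(\gamma+1)}=\varrho_n^{\gamma/(\gamma+1)}\big(|\vm_n|^2/\varrho_n\big)^{\gamma/(\gamma+1)}\le\varrho_n^{\gamma/(\gamma+1)}(2E_n)^{\gamma/(\gamma+1)}$, applies H\"older with exponents $\gamma+1$ and $(\gamma+1)/\gamma$, and inserts (a) and (c).

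The routine parts are the energy identity and the $L^\gamma$/$L^{2\gamma/(\gamma+1)}$ algebra. The genuine obstacle is the minimal entropy principle: it hinges on the precise algebraic structure of the VFV fluxes, and one must check that the discrete internal-energy balance obtained by combining the momentum and total-energy equations has a manifestly nonnegative dissipation, so that the renormalized entropy inequality holds with the correct sign. This is exactly what is carried out in the consistency and stability computations of Chapter~10 in Ref.~\citenum{feireisl2021Book}; in the present semidiscrete setting the argument is somewhat cleaner than in the fully discrete one, since the differential-inequality argument runs directly, without the CFL-type condition needed to preserve monotonicity under an explicit time step.
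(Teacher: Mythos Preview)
Your proposal is correct and follows essentially the same approach as the paper, which simply cites the relevant results of Chapter~10 of Ref.~\citenum{feireisl2021Book} (Lemma~10.3 for \ref{thm:propDetScheme_existence}--\ref{thm:propDetScheme_positivity}, the computation leading to (10.39) for \ref{thm:propDetScheme_minEntropy}, and (10.40), (10.41), (10.18) for \ref{thm:propDetScheme_stability}). Your reconstruction of the underlying argument --- local existence via Picard--Lindel\"of, positivity via monotonicity of $\flux$, the discrete internal-energy balance yielding the renormalized entropy inequality, and the algebraic derivation of the $L^\gamma$/$L^{2\gamma/(\gamma+1)}$ bounds from energy conservation and the entropy lower bound --- is exactly what those references contain, so there is no substantive difference in method.
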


	\begin{proof}
		Results \ref{thm:propDetScheme_existence}, \ref{thm:propDetScheme_positivity} are presented in Lemma 10.3 of Ref.~\citenum{feireisl2021Book}. The proof of the minimal entropy principle \ref{thm:propDetScheme_minEntropy} follows from the calculations leading to equation (10.39) of Ref.~\citenum{feireisl2021Book}. The stability estimates \ref{thm:propDetScheme_stability} follow from  (10.40), (10.41) and (10.18) of Ref.~\citenum{feireisl2021Book}, respectively.
	\end{proof}

As shown in Theorem 10.3 of Ref.~\citenum{feireisl2021Book}, the VFV method is consistent. This means that numerical solutions satisfy the generalized weak form of the Euler system up to the so-called \textit{consistency error}, which vanishes in the limit $n\to \infty$.

	\subsection{Conditional regularity of the viscous finite volume method}
	
	It is well known that for hyperbolic conservation laws equipped with a convex entropy, strong solutions exist locally for sufficiently smooth initial data. Moreover, conditional regularity results are known which guarantee global existence of strong solutions provided derivatives of the solutions remain bounded in suitable norms, see, e.g.,
	Ref.~\citenum{dafermos2005hyperbolic}. In this section, we prove an analogous result on the discrete level for the VFV method. Instead of bounds on derivatives, we will assume bounds on discrete derivatives of the form $\limsup_n |\jump{a_n}/h| <\infty$ for specific choices of $a_n$. We omit a more precise notation of jump $\jump{a_n}_\sigma$ and use $\jump{a_n}$ for simplicity.
More precisely, we will assume that the following three assumptions hold.
	
	\begin{assumption}\label{ass:det1}
		\begin{equation*}
			\limsup_n \max_{\sigma\in \inFac_{n}} |\jump{\varrho_n}/h_n| <\infty 
		\end{equation*}
	\end{assumption}

	\begin{assumption}\label{ass:det2}
		One of the following statements is true.
		\begin{enumerate}[label*=(\roman*)]
			\item $\limsup_n \max_{\sigma\in\inFac_{n} } |\jump{\vu_n}/h_n| <\infty$.
			\item There exists a constant $\underline{\varrho}>0$ such that
			$\varrho_n \geq \underline{\varrho}$ and $\limsup_n \max_{\sigma\in \inFac_{n} } |\jump{\vm_n}/h_n| <\infty$.
		\end{enumerate}
	\end{assumption}

	\begin{assumption}\label{ass:det3}
		One of the following statements is true.
		\begin{enumerate}[label*=(\roman*)]
			\item $\limsup_n \max_{\sigma\in \inFac_{n}} |\jump{\vartheta_n}/h_n| <\infty$.
			\item $\limsup_n \max_{\sigma\in \inFac_{n}} |\jump{E_n}/h_n| <\infty$.
		\end{enumerate}
	\end{assumption}

	Under the above assumptions, using suitable parameters $\alpha$, $\epsilon$, and starting from suitable initial data, we show that the VFV approximations converge strongly to a strong solution. In particular, the strong solution exists as long as the above assumptions are satisfied on the discrete level for all $n\in\naturals$.

	\begin{theorem}\label{thm:deterministic_convergence}
		Let $\{\varrho_n, \vm_n, E_n\}_{n=1}^\infty$ be a sequence of VFV approximations with $0<\alpha<4/3$ and $\epsilon>-1$ satisfying assumptions (\ref{ass:det1}) - (\ref{ass:det3}) and
		\begin{enumerate}[label=(\roman*)]
			\item $h_n\to 0$ for $n\to\infty$,
			\item the discrete initial data $(\varrho_{n,0}, \vm_{n,0}, E_{n,0})$ are admissible in the sense of Definition~2.1;
		\end{enumerate}
		Then $(\varrho_n, \vm_n, E_n) \to (\varrho, \vm, E)$  in $L^q((0,T); L^\gamma(Q) \times L^{\frac{2 \gamma}{\gamma +1}}(Q; \reals^d)\times L^1(Q))$, where $ 1 \leq q < \infty $  and  $(\varrho, \vm, E)$  is a  strong solution of (\ref{eq:EulerSystem}).
	\end{theorem}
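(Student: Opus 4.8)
The plan is to prove Theorem~\ref{thm:deterministic_convergence} by a strong compactness argument. In a nutshell: Assumptions~\ref{ass:det1}--\ref{ass:det3} together with the a priori bounds of Theorem~\ref{thm:propertiesDetScheme} confine the family $(\varrho_n,\vm_n,E_n)$ tightly enough that it is precompact in $C([0,T]\times\overline Q)$; every accumulation point is, by consistency of the VFV scheme, a weak solution of (\ref{eq:EulerSystem}); being Lipschitz in $\vx$ with strictly positive density and temperature it is in fact a strong solution by the observation preceding Theorem~\ref{lem:uniquenessStrongSolutions}; and Theorem~\ref{lem:uniquenessStrongSolutions} then upgrades subsequential convergence to convergence of the whole sequence.

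\emph{Step 1 (uniform bounds).} Theorem~\ref{thm:propertiesDetScheme} provides the uniform $L^\infty_tL^\gamma_x$, $L^\infty_tL^{2\gamma/(\gamma+1)}_x$ and $L^\infty_tL^1_x$ bounds on $\varrho_n,\vm_n,E_n$, the positivity $\varrho_n,\vartheta_n>0$, the minimal entropy principle $s_n\ge\underline s$, and---testing the discrete energy equation in (\ref{eq:detDiscretization}) with the constant $1$, for which every jump term drops---the exact conservation $\int_Q E_n(\tau)\ddd\vx=\mathcal{E}_{0,n}$. Assumption~\ref{ass:det1} and the chosen branches of Assumptions~\ref{ass:det2}--\ref{ass:det3} bound, uniformly in $n$ and $t$, the discrete differential quotients of $\varrho_n$, of $\vu_n$ (resp.\ $\vm_n$) and of $\vartheta_n$ (resp.\ $E_n$); a discrete Morrey-type inequality turns such a bound, combined with an $L^p$ (or $L^1$) bound, into a uniform bound in $L^\infty((0,T)\times Q)$. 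The only point that needs a computation is a uniform lower bound $\varrho_n\ge\underline{\varrho}_\ast>0$: it is built into branch~\ref{ass:det2}(ii), and in branch~\ref{ass:det2}(i) it follows from a discrete Gronwall inequality for $t\mapsto\min_K\varrho_n^K$---the crude flux estimate is too lossy, but writing the discrete outflow through $\partial K$ as a discrete divergence of $\ava{\vu_n}$ and using that the face-area-weighted outer normals of $K$ sum to zero extracts a factor $|\jump{\vu_n}|\lesssim h_n$ times the discrete Lipschitz constant, whence $\tfrac{d}{dt}\min_K\varrho_n^K\ge -C\min_K\varrho_n^K$ with $C$ independent of $n$, and one concludes via $\liminf_n\essinf_{Q}\varrho_{0,n}>0$. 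Since $\varrho_n$ and, through $s_n\ge\underline s$, also $\vartheta_n$ are then bounded away from zero, a short bootstrap bounds $\vu_n,\vm_n,p_n,E_n,s_n$ in $L^\infty((0,T)\times Q)$ and yields uniform discrete Lipschitz bounds for all of them.

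\emph{Step 2 (compactness).} Reading $\Dt\varrho_n$, $\Dt\vm_n$, $\Dt E_n$ off from (\ref{eq:detDiscretization}) and inserting the $L^\infty$ and discrete-gradient bounds of Step~1 shows that each of them is bounded, uniformly in $n$, in the dual of the space of Lipschitz test functions. Combined with the uniform spatial discrete Lipschitz bounds---which, because $h_n\to0$, turn the piecewise constant states into an equibounded, asymptotically equicontinuous family, hence precompact in $C(\overline Q)$ by a discrete Arzel\`a--Ascoli argument---a (discrete) Aubin--Lions--Simon type compactness lemma yields a subsequence with $(\varrho_n,\vm_n,E_n)\to(\varrho,\vm,E)$ uniformly on $[0,T]\times\overline Q$; the limit is Lipschitz in $\vx$, with $\varrho$ and $\vartheta$ bounded below away from zero.

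\emph{Step 3 (identification of the limit and conclusion).} By consistency of the VFV method (Theorem 10.3 of Ref.~\citenum{feireisl2021Book}) the discrete solutions satisfy the four relations of the weak-solution definition up to consistency errors tending to $0$; this is precisely where $0<\alpha<4/3$ and $\epsilon>-1$ enter, ensuring that the artificial-viscosity contributions $h_n^{\alpha-1}\jump{\vu_n}$, the upwind dissipation $h_n^{\epsilon}\jump{a_n}$ and the remaining error terms all vanish once the discrete gradients and amplitudes are controlled. The uniform convergence of Step~2, the strict positivity of $\varrho$, and the admissibility of the initial data (Definition~\ref{def:admissInitData}(iv), which in particular forces $\varrho_0>0$ a.e.\ and $\mathcal{E}_{0,n}\to\int_Q E_0\ddd\vx$) permit passage to the limit in the nonlinear terms $\vm_n\otimes\vm_n/\varrho_n$, $p_n$, $\varrho_n\chi(s_n)\vu_n$ and in the initial traces, while the energy balance $[\int_Q E\ddd\vx]_{t=0}^{t=\tau}=0\le0$ follows from exact discrete energy conservation. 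Hence $(\varrho,\vm,E)$ is a weak solution; being Lipschitz in $\vx$---and therefore, through the equations (e.g.\ $\Dt\varrho=-\Div\vm$), also in $t$---with positive density and temperature, it is a strong solution by the remark preceding Theorem~\ref{lem:uniquenessStrongSolutions}. Theorem~\ref{lem:uniquenessStrongSolutions} identifies it as the unique strong solution with the given initial data, so every subsequence of $(\varrho_n,\vm_n,E_n)$ has a further subsequence converging to it; the full sequence therefore converges, uniformly on $[0,T]\times\overline Q$ and hence in $L^q((0,T);L^\gamma(Q)\times L^{2\gamma/(\gamma+1)}(Q;\reals^d)\times L^1(Q))$ for every $1\le q<\infty$. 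I expect the real work to sit in Steps~2 and~3: fitting the discrete Aubin--Lions--Simon/Arzel\`a--Ascoli machinery around the mesh-dependent function spaces, and verifying that the VFV consistency error genuinely vanishes throughout the window $0<\alpha<4/3$, $\epsilon>-1$ with only the bounds of Step~1 available.
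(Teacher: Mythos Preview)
Your overall strategy---uniform $L^\infty$ bounds and a lower density bound, compactness via equi-Lipschitz arguments, identification of the limit through the VFV consistency theorem, and uniqueness from Theorem~\ref{lem:uniquenessStrongSolutions}---matches the paper's, and your density lower bound sketch in Step~1 is essentially the paper's argument (Lemma~\ref{lem:density_lowerBound}). The weak point is the time regularity. In Step~2 you bound $\partial_t\varrho_n,\partial_t\vm_n,\partial_t E_n$ only in the dual of Lipschitz test functions; Aubin--Lions--Simon then produces uniform convergence and a limit that is Lipschitz in $\vx$ but merely continuous in $t$. Your proposed recovery of time-Lipschitz regularity ``through the equations'' in Step~3 works for $\varrho$ and $\vm$ (the weak mass and momentum identities give $\partial_t\varrho,\partial_t\vm\in L^\infty$ once the right-hand sides are Lipschitz in $\vx$), but \emph{not} for $E$: the weak formulation used here contains no pointwise equation for $E$, only the renormalized entropy inequality and the global energy inequality, and neither delivers $\partial_t E\in L^\infty$. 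Without $E$ Lipschitz in $(t,\vx)$ you cannot invoke the ``Lipschitz weak solution $\Rightarrow$ strong solution'' step.

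The fix---and this is what the paper does in Lemma~\ref{lem:LipschitzInTime}---is to strengthen the time bound at the discrete level: testing each equation in (\ref{eq:detDiscretization}) with $\varphi=\ind{K}$ and applying the same ``face-area-weighted outer normals sum to zero'' trick you already invoke in Step~1, one finds $\sum_{\sigma\in\fac(K)}\int_\sigma\ava{a_n}\ava{\vb_n}\cdot\vn_K\ddd S=\mathcal{O}(h_n^d)$ whenever $a_n,\vb_n$ have $\mathcal{O}(h_n)$ jumps and uniformly bounded $L^\infty$ norms, whence $|\partial_t\varrho_n^K|,|\partial_t\vm_n^K|,|\partial_t E_n^K|\le C$ uniformly in $K$ and $n$. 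The paper then builds piecewise linear interpolants (Lemma~\ref{lem:LipApproximations}) that are uniformly Lipschitz in $(t,\vx)$ and applies Arzel\`a--Ascoli directly, so the limit is Lipschitz in $(t,\vx)$ from the outset and no post-hoc bootstrap through the weak formulation is needed.
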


	The rest of this section deals with the proof of  Theorem~\ref{thm:deterministic_convergence}. Therefore, from now on until the end of this section $(\varrho_n, \vm_n, E_n)$ denotes a sequence of VFV approximations with  $0<\alpha<4/3$ and $\epsilon>-1$. We assume that assumptions (\ref{ass:det1}) - (\ref{ass:det3})  and (i), (ii)  from Theorem~\ref{thm:deterministic_convergence} hold.
	
	\subsubsection{A priori bounds}

	We show suitable a priori estimates and formulate the following auxiliary result.

	\begin{lemma}\label{lem:jumpAdjacentElements}
		There exists $C_B>0$ independent of $n$ and $\vx \in Q$ such that the number of elements $K\in \mathcal{T}_n$ with $B_{h_n}(\vx)\cap K \neq \emptyset$ is bounded by $C_B$.
		Further,
		let $\vf_n\in Q_n(Q,\reals^k)$ satisfy $\max_{\sigma\in\fac_n } |\jump{\vf_n}| \leq Ch_n$ for all $n$ and let $L,K$ be elements of $\mathcal{T}_n$ with $K\cap L \neq \emptyset$. Then there exists $C^\prime>0$ independent of $n$ such that
		\begin{equation*}
			|\vf_n^{K}-\vf_n^{L}| \leq C^\prime h_n .
		\end{equation*}
	\end{lemma}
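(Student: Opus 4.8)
The plan is to prove the two assertions of the lemma separately; the first is a soft volume‑packing estimate and feeds into the second.

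\emph{The packing bound.} I would argue by a volume count. Fix $\vx\in Q$ and let $K_1,\dots,K_N$ be the distinct elements of $\mathcal T_n$ meeting $B_{h_n}(\vx)$. Choosing a point of $\overline{K_i}\cap\overline{B_{h_n}(\vx)}$ and using $\diam(K_i)\le h_n$ gives $\overline{K_i}\subset\overline{B_{2h_n}(\vx)}$ for every $i$. Since the elements have pairwise disjoint interiors and $|K_i|\ge c_1 h_n^d$, summing volumes yields $N c_1 h_n^d\le|B_{2h_n}(\vx)|=\omega_d(2h_n)^d$, where $\omega_d$ is the volume of the unit ball, so $N\le 2^d\omega_d/c_1\eqqcolon C_B$, a constant depending only on $d$ and $c_1$ (in particular on neither $n$ nor $\vx$).

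\emph{Accumulating the jump along a chain.} For the second claim, since $K\cap L\neq\emptyset$ I fix a point $\vx_0\in K\cap L$ and consider the local patch $\mathcal S\coloneqq\{K'\in\mathcal T_n : \vx_0\in\overline{K'}\}$, which contains both $K$ and $L$. Every element of $\mathcal S$ meets $B_{h_n}(\vx_0)$, so by the first part $\#\mathcal S\le C_B$. The reduction step is to connect $K$ to $L$ by a chain $K=K_0,K_1,\dots,K_m=L$ inside $\mathcal S$ in which consecutive elements share a $(d-1)$‑dimensional facet $\sigma_j\coloneqq K_{j-1}\cap K_j\in\inFac_n$; taking the chain to realise a simple path in the corresponding graph gives $m\le\#\mathcal S-1\le C_B-1$. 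Along such a chain $|\vf_n^{K_{j-1}}-\vf_n^{K_j}|=|\jump{\vf_n}_{\sigma_j}|\le C h_n$ by hypothesis, and the triangle inequality gives
\[
|\vf_n^{K}-\vf_n^{L}|\;\le\;\sum_{j=1}^m|\vf_n^{K_{j-1}}-\vf_n^{K_j}|\;\le\;mCh_n\;\le\;(C_B-1)Ch_n\;\eqqcolon\;C'h_n,
\]
with $C'$ independent of $n$, as claimed.

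\emph{The one genuinely geometric point.} What makes the chain exist is that the facet‑adjacency graph on $\mathcal S$ — vertices the elements of $\mathcal S$, edges the pairs sharing a facet (which, for elements of $\mathcal S$, automatically contains $\vx_0$ by conformity) — is connected. This is the step I expect to require the most care, and it is where the mesh being conforming with no hanging nodes and $\partial Q$ being Lipschitz enter. I would prove it by passing to a small sphere: for $\rho>0$ small enough that $B_\rho(\vx_0)$ meets only elements of $\mathcal S$, the traces $\overline{K'}\cap S_\rho(\vx_0)$, $K'\in\mathcal S$, cover $S_\rho(\vx_0)\cap\overline Q$ with pairwise disjoint relative interiors; and $S_\rho(\vx_0)\cap\overline Q$ is connected — a topological sphere if $\vx_0$ is an interior point of $Q$ and a topological ball if $\vx_0\in\partial Q$, by the Lipschitz assumption. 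Two traces abut along a $(d-2)$‑dimensional piece exactly when the corresponding elements share a facet, so connectedness of $S_\rho(\vx_0)\cap\overline Q$ forces connectedness of the facet‑adjacency graph. Everything else in the argument is bookkeeping.
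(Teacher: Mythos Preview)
The paper states this lemma without proof, treating both assertions as standard facts about shape-regular conforming simplicial meshes; there is therefore nothing to compare your argument against. Your proof is correct. The volume-packing count for $C_B$ is the standard one, and the chain-through-the-star argument for $C'$ is the natural route, with the length of the chain controlled by the first part.

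The connectedness of the facet-adjacency graph on the star of $\vx_0$ is indeed the only substantive point, and your sphere-trace justification is sound: the traces form a simplicial decomposition of the connected $(d{-}1)$-manifold $S_\rho(\vx_0)\cap\overline{Q}$, and for any triangulation of a connected manifold the dual (facet-adjacency) graph is connected, since a path between interior points of two top cells can be perturbed off the codimension-$2$ skeleton and then records a facet-adjacent chain. One cosmetic simplification: because the mesh is conforming, $K\cap L$ is itself a subsimplex and hence contains a mesh vertex, so you may take $\vx_0$ to be a vertex from the outset and work with the usual combinatorial star and link rather than an arbitrary point of the intersection.
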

Since $Q$ is a bounded domain with Lipschitz boundary, there exists $C_Q\geq 1$ such that for each $\vx,\vy\in Q$ there is  a Lipschitz continuous curve $\gamma\colon [0,1] \to Q$ such that $\gamma(0)=\vx$, $\gamma(1)=\vy$ and $\len(\gamma)\leq C_Q \|\vx-\vy\|$. As a straightforward consequence of the previous lemma, we get the following result.

	\begin{lemma}\label{lem:boundedness_of_approximationseq}
		Let $\vf_n\in Q_n(Q,\reals^k)$ satisfy $\max_{\sigma\in\fac_n } |\jump{\vf_n}| \leq Ch_n$ and let $L,K$ be elements of $\mathcal{T}_n$. Then there exists $C''$ independent of $n$ such that
		\begin{equation*}
			|\vf_n^{K}-\vf_n^{L}| \leq C'' (\dist(K,L)+h_n).
		\end{equation*}
	\end{lemma}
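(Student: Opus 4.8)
The plan is to deduce the estimate from Lemma 2.5 by chaining together the elementwise bounds along a curve connecting control points of $K$ and $L$, then bounding the number of links in the chain by a multiple of $\dist(K,L)/h_n + 1$. First I would fix elements $K, L \in \mathcal{T}_n$ and pick the Lipschitz curve $\gamma\colon[0,1]\to Q$ of length at most $C_Q\|\vx_K - \vx_L\|$ joining the control points $\vx_K$ and $\vx_L$ guaranteed by the paragraph preceding the lemma. Note that $\|\vx_K - \vx_L\| \leq \dist(K,L) + 2h_n$ since the control points lie in (the closure of) their respective elements, which have diameter at most $h_n$; hence $\len(\gamma) \leq C_Q(\dist(K,L) + 2h_n)$.

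Next I would enumerate the elements traversed by $\gamma$ in order, say $K = K_0, K_1, \dots, K_N = L$, where consecutive elements along the curve are either equal or share at least a boundary point, so that $K_{i-1}\cap K_i \neq \emptyset$ for each $i$. By Lemma 2.5 applied to each such adjacent pair we get $|\vf_n^{K_{i-1}} - \vf_n^{K_i}| \leq C' h_n$, and the triangle inequality yields
\begin{equation*}
	|\vf_n^K - \vf_n^L| \leq \sum_{i=1}^N |\vf_n^{K_{i-1}} - \vf_n^{K_i}| \leq N C' h_n .
\end{equation*}
It remains to bound $N$. Since each $K_i$ has diameter between (a uniform constant times) $h_n$ and $h_n$, and the curve $\gamma$ must travel through each one, a covering/volume argument — or more simply, splitting $\gamma$ into arcs of length $h_n$ and invoking the first part of Lemma 2.5 (each ball $B_{h_n}(\gamma(t))$ meets at most $C_B$ elements) — shows that the number of distinct elements met by $\gamma$ is at most a constant multiple of $\len(\gamma)/h_n + 1$. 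Combining, $N \leq C_B'(\len(\gamma)/h_n + 1) \leq C_B' (C_Q(\dist(K,L)+2h_n)/h_n + 1)$, so $N C' h_n \leq C''(\dist(K,L) + h_n)$ for a constant $C''$ independent of $n$, which is the claim.

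The main obstacle is the combinatorial step of bounding $N$: one must argue carefully that a curve of length $\ell$ in a shape-regular mesh of size $h_n$ can only meet $O(\ell/h_n + 1)$ elements. This is where shape-regularity and the first assertion of Lemma 2.5 are essential — without a uniform lower bound on element size relative to $h_n$, the curve could thread through arbitrarily many slivers. The cleanest route is to cover $\gamma$ by $\lceil \len(\gamma)/h_n \rceil + 1$ balls of radius $h_n$ centered at points spaced $h_n$ apart along the curve; every element met by $\gamma$ intersects at least one such ball, and each ball meets at most $C_B$ elements by Lemma 2.5, giving the bound on $N$ directly. Everything else is the triangle inequality and the trivial estimate $\|\vx_K-\vx_L\| \le \dist(K,L)+2h_n$.
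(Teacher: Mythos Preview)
Your argument is essentially the paper's: connect the two elements by a Lipschitz curve in $Q$, cover the curve by $O(\len(\gamma)/h_n + 1)$ balls of radius $h_n$, invoke the first part of Lemma~\ref{lem:jumpAdjacentElements} to bound the number of elements met, and chain the second part along the resulting sequence of pairwise-intersecting elements via the triangle inequality. One small point to tighten: the paper's definition of control points only imposes the orthogonality condition $\vx_K-\vx_L\parallel\vn_\sigma$ and does not guarantee $\vx_K\in\overline{K}$, so rather than joining $\vx_K$ and $\vx_L$, the paper picks interior points $\vx\in K^\circ$, $\vy\in L^\circ$ with $\|\vx-\vy\|\leq \dist(K,L)+\delta_{\text{dist}}$ and sends $\delta_{\text{dist}}\to 0$ at the end.
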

	
	\begin{proof}
		For $\delta_{\text{dist}}>0$ there are $\vx \in K^\circ$ and $\vy \in L^\circ$ with $\|\vx-\vy\| \leq \dist(K,L) + \delta_{\text{dist}}$. Denote by $\gamma \colon [0,1]\to Q$ a Lipschitz continuous curve with $\gamma(0)=\vx$, $\gamma(1)=\vy$ and $\len(\gamma)\leq C_Q \|\vx-\vy\|$.
		Fixing $\delta_h \in (0,h_n)$ we choose $N_t = 2\lceil \len(\gamma([0,1]))/(h_n - \delta_h) \rceil +1 $ points $0=t_1<t_2< \dots t_{N_t}=1$ such that 
		$\len(\gamma([t_i,t_{i+1}])) = (h_n - \delta_h)/2$ for all $i=1,\dots, N_t-2$ and $\len(\gamma([t_{N_t-1},t_{N_t}])) \leq (h_n - \delta_h)/2$. Thus, $B_{h_n}(\gamma(t_{2i}))$, $i=1,\dots, (N_t-1)/2$, is an open cover of $\gamma([0,1])$.
		For each set $B_{h_n}(\gamma(t_{2i}))$ there are at most $C_B$ elements of $\mathcal{T}_n$ which cover $Q \cap B_{h_n}(\gamma(t_{2i}))$. Thus, we find a sequence of $C_1 \leq  C_B \lceil \len(\gamma([0,1]))/(h_n - \delta_h) \rceil$ elements $K_1,\dots, K_{C_1}$ such that $K_1=K$, $K_{C_1}=L$ and $K_i\cap K_{i+1}\neq \emptyset $. Applying the previous lemma with $\delta_h$ small enough such that
		$
		\lceil \len(\gamma([0,1]))/(h_n - \delta_h) \rceil \leq \len(\gamma([0,1]))/h_n + 1
		$ and taking the limit $\delta_\text{dist}\to 0$ finishes the proof. 
	\end{proof}

	\begin{remark}
		In particular, note that (\ref{ass:det2}) (ii) implies (\ref{ass:det2}) (i), since
		\begin{equation*}
			\jump{\vu_n} = \jump{\vm_n}\ava{\varrho_n^{-1}}
			-
			\xi^{-2}\jump{\varrho_n}\ava{m_n}
		\end{equation*} 
		with $\xi\in\conv\{\varrho_n^\mathrm{in},\,\varrho_n^\mathrm{out}\}$. The next lemma shows that the reverse implication is true as well.
	\end{remark}

	\begin{lemma}\label{lem:density_lowerBound}
		If $\max_{\sigma\in \fac_n} |\jump{\vu_n}| \leq C h_n$ for all $n\in\naturals$, then there exists $\underline{\varrho}$ independent of $n$ such that $\varrho_n\geq \underline{\varrho}$ for $n$ large enough.
	\end{lemma}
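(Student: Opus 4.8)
\emph{Proof plan.}
The idea is to derive a Gr\"onwall-type differential inequality for the pointwise minimum
$\varrho_{\min,n}(t):=\min_{K\in\mathcal T_n}\varrho_n^K(t)$ of the discrete density, the key point being that the hypothesis $\max_{\sigma\in\fac_n}|\jump{\vu_n}|\le Ch_n$ forces the discrete divergence of $\ava{\vu_n}$ over any cell to be bounded by a constant independent of $n$. By Theorem~\ref{thm:propertiesDetScheme} the approximate density exists on $[0,\infty)$ and is positive, and since for each fixed $n$ the scheme is an ODE system with locally Lipschitz right-hand side, $\varrho_{\min,n}$ is a positive, locally Lipschitz function of $t$.

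First I would test the discrete continuity equation (the first line of (\ref{eq:detDiscretization})) with $\varphi=\ind K$ for a fixed $K\in\mathcal T_n$. Since the flux sum runs over interior faces only, this yields a cellwise ODE for $\varrho_n^K$ which, rewritten in terms of the outward unit normal $\vn_{K,\sigma}$ of $K$ along $\sigma$, is exactly the monotone upwind scheme with artificial diffusion underlying the positivity statement Theorem~\ref{thm:propertiesDetScheme}(ii). At a time $t$ at which $K$ realises the minimum---so that $\varrho_n^K(t)\le\varrho_n^{L}(t)$ for every neighbour $L$ of $K$---the artificial-diffusion contribution is $\ge 0$ and the upwind convective term is, face by face, bounded below by the expression obtained upon replacing the neighbouring density by $\varrho_n^K$, which equals $-\varrho_n^K\,\ava{\vu_n}_\sigma\cdot\vn_{K,\sigma}$. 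Summing over the interior faces of $K$ gives
\begin{equation*}
	\Dt\varrho_n^K(t)\ \ge\ -\,\varrho_n^K(t)\sum_{\sigma\in\fac(K)\cap\inFac_n}\frac{|\sigma|}{|K|}\,\ava{\vu_n}_\sigma\cdot\vn_{K,\sigma}.
\end{equation*}

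The heart of the argument is to bound the discrete divergence on the right. Using the elementary identity $\sum_{\sigma\in\fac(K)}|\sigma|\vn_{K,\sigma}=0$ and the fact that on every exterior face of $K$ one has $\vu_n^K\cdot\vn_\sigma=\varrho_n^{-1}\,\vm_n^{\mathrm{in}}\cdot\vn_\sigma=0$ by the discrete impermeability condition, I may subtract the (vanishing) constant $\vu_n^K$ and replace $\ava{\vu_n}_\sigma$ by $\ava{\vu_n}_\sigma-\vu_n^K=\tfrac12\jump{\vu_n}_\sigma$ inside the sum. The hypothesis $|\jump{\vu_n}_\sigma|\le Ch_n$, together with $|\sigma|\le C_2h_n^{d-1}$, $|K|\ge c_1h_n^d$ and the bounded number $d+1$ of faces of a simplex, then gives
\begin{equation*}
	\Big|\sum_{\sigma\in\fac(K)\cap\inFac_n}\frac{|\sigma|}{|K|}\,\ava{\vu_n}_\sigma\cdot\vn_{K,\sigma}\Big|\ \le\ \frac{(d+1)\,C\,C_2}{2\,c_1}\ =:\ L_0,
\end{equation*}
a constant independent of $n$. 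Hence $\Dt\varrho_n^K(t)\ge-L_0\,\varrho_{\min,n}(t)$ at every $t$ and every minimising cell $K$; since $\varrho_{\min,n}$ is Lipschitz and, at each of its points of differentiability, its derivative coincides with $\Dt\varrho_n^K$ for some minimising $K$, we obtain $\tfrac{d}{dt}\varrho_{\min,n}\ge-L_0\varrho_{\min,n}$ a.e. on $[0,T]$. Therefore $t\mapsto\varrho_{\min,n}(t)\,e^{L_0t}$ is nondecreasing, so $\varrho_{\min,n}(t)\ge\varrho_{\min,n}(0)\,e^{-L_0T}$ for all $t\in[0,T]$.

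Finally $\varrho_{\min,n}(0)=\essinf_Q\varrho_{0,n}$, and by the admissibility of the discrete initial data (Definition~\ref{def:admissInitData}, condition (i)) this quantity is bounded below by a fixed positive number for all $n$ large enough; multiplying by $e^{-L_0T}$ yields the desired $\underline\varrho$. The only genuinely delicate step is the uniform bound on the discrete divergence: it is precisely here that the $O(h_n)$ scaling of the velocity jumps---rather than a mere uniform bound on $|\jump{\vu_n}|$---is indispensable, since an $O(1)$ jump would produce a divergence of order $h_n^{-1}$ and the Gr\"onwall constant would blow up.
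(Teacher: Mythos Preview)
Your argument is correct and rests on the same mechanism as the paper's proof: the hypothesis $|\jump{\vu_n}|\le Ch_n$ makes the discrete divergence of $\ava{\vu_n}$ uniformly bounded, which turns the discrete continuity equation into a differential inequality yielding an exponential-in-time lower bound for the density. The packaging, however, is different. You work \emph{locally}: you fix a minimising cell $K$, use the identity $\sum_{\sigma\in\fac(K)}|\sigma|\vn_{K,\sigma}=0$ together with the boundary condition to replace $\ava{\vu_n}$ by $\tfrac12\jump{\vu_n}$, and then run Gr\"onwall on $\varrho_{\min,n}$. The paper works \emph{globally}: it constructs the explicit subsolution $\underline\varrho(t)=\underline\varrho(0)e^{-Lt}$, shows via the discrete integration-by-parts identity $\sum_\sigma\ava{\vu_n}\!\cdot\!\vn\,\jump{\varphi_n}=-\sum_\sigma\jump{\vu_n}\!\cdot\!\vn\,\ava{\varphi_n}$ and the trace-type bound \eqref{eq:avaBoundByL1Norm} that $\underline\varrho$ satisfies the reverse transport inequality, and then invokes the comparison principle of Lemma~\ref{lem:superTransport} applied to $r_n=\varrho_n-\underline\varrho$. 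Your route is slightly more elementary in that it avoids Lemma~\ref{lem:superTransport} altogether; the paper's route has the advantage that the same comparison lemma is reused elsewhere (e.g.\ for the upper bound in Lemma~\ref{lem:temp_upper_bound}), so it factors out a general tool. Either way the constants and the final bound $\underline\varrho=\underline\varrho(0)e^{-LT}$ coincide.
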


	To prove the above lemma, we will use the following auxiliary lemma that is a slight modification of Lemma 10.2 of Ref.~\citenum{feireisl2021Book}.
	
	\begin{lemma}\label{lem:superTransport}
		Let $r_n \colon [0,T] \to \mathcal{Q}_n$ satisfy $r_n(0)\geq 0$, the bound $\sup_{t,\vx} |r_n(t,\vx)|<\infty$ and for all $\varphi_n\in\mathcal{Q}_n, \varphi_n\geq 0$ holds
		\begin{equation}\label{eq:subTransport}
			\frac{d}{d\,t} \int_Q r_n \phi_n \ddd \vx
			-
			\sigsum \int_\sigma \flux\big[r_n,\, \vu_n\big] \jump{\varphi_n} \ddd S(\vx) \geq 0 .
		\end{equation}
		Then $r_n(t)\geq 0$ for all $t\in[0,T]$.
	\end{lemma}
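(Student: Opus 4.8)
The plan is to localize the weak inequality \eqref{eq:subTransport} to the individual mesh cells, turning it into a coupled system of scalar differential inequalities for the cell values $r_n^K(t)$, and then to propagate nonnegativity in time by a comparison (Gronwall-type) argument applied to the running minimum $\omega(t):=\min_{K\in\mathcal{T}_n}r_n^K(t)$.

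First, I would test \eqref{eq:subTransport} with $\varphi_n=\ind K$ for an arbitrary $K\in\mathcal{T}_n$. Since $\jump{\ind K}_\sigma=0$ unless $\sigma\in\fac(K)$, this yields, for every $K$ and every $t$,
\begin{equation*}
  |K|\,\tfrac{d}{dt}r_n^K \;\ge\; -\!\!\sum_{\sigma\in\fac(K)\cap\inFac_n}\!\!|\sigma|\,F_{K,\sigma},
\end{equation*}
where $F_{K,\sigma}$ is the numerical flux leaving $K$ through $\sigma$ in the direction of the outer normal $\vn_{K,\sigma}$ of $K$ — that is, the upwind flux $\UP[r_n,\vu_n]$ augmented by the viscous jump $-h_n^\epsilon\jump{r_n}$, written with respect to that orientation — and in particular $F_{K,\sigma}=-F_{L,\sigma}$ across $\sigma=K\cap L$. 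Arriving at this ``outgoing flux'' form, keeping the face orientation $\vn_\sigma$ versus $\vn_{K,\sigma}$, the convention $[x]^++[x]^-=x$ for the positive/negative parts, and the sign of the viscous jump mutually consistent, is the step that needs the most care.

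Next, $\omega$ is absolutely continuous (each $r_n^K$ is differentiable in $t$ by \eqref{eq:subTransport}, and $C^1$ in the applications), and at a.e.\ $t$ one has $\omega'(t)=\tfrac{d}{dt}r_n^{K(t)}(t)$ for any cell $K(t)$ realizing the minimum. At such a cell $K=K(t)$ every neighbour satisfies $r_n^L\ge r_n^K=\omega(t)$; since $[\ava{\vu_n}\cdot\vn_{K,\sigma}]^-\le0$ and $h_n^\epsilon(r_n^L-r_n^K)\ge0$, this forces $-F_{K,\sigma}\ge -r_n^K\,(\ava{\vu_n}\cdot\vn_{K,\sigma})$, so that, summing over the faces of $K$ and dividing by $|K|$,
\begin{equation*}
  \omega'(t)\;\ge\;-\,\omega(t)\,c_{K(t)}(t)\;\ge\;-\bar c(t)\,|\omega(t)|,\qquad
  c_K(t):=\tfrac1{|K|}\!\!\sum_{\sigma\in\fac(K)\cap\inFac_n}\!\!|\sigma|\,\ava{\vu_n}\cdot\vn_{K,\sigma},
\end{equation*}
with $\bar c(t):=\max_K|c_K(t)|$. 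For each fixed $n$ the ratios $|\sigma|/|K|$ are comparable to $h_n^{-1}$ and $\vu_n\in L^\infty$, so $\bar c\in L^1(0,T)$ — which is all that is needed, $n$ being fixed.

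Finally, since $\omega(0)=\min_K r_n^K(0)\ge0$, I would argue by contradiction: if $\omega(t_1)<0$ for some $t_1$, set $\tau:=\sup\{t\le t_1:\omega(t)\ge0\}$, so that $\omega(\tau)=0$ and $\omega<0$ on $(\tau,t_1]$, where then $\omega'(t)\ge\bar c(t)\,\omega(t)$; hence $t\mapsto\omega(t)\exp\bigl(-\int_\tau^t\bar c\bigr)$ is nondecreasing, vanishes at $\tau$, and is therefore $\ge0$ at $t_1$, contradicting $\omega(t_1)<0$. Thus $\omega\ge0$ on $[0,T]$, i.e.\ $r_n(t)\ge0$ for all $t\in[0,T]$. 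The main obstacle is the bookkeeping in the first two steps, namely establishing the discrete monotonicity $-F_{K,\sigma}\ge-r_n^K\,(\ava{\vu_n}\cdot\vn_{K,\sigma})$ at the minimizing cell with all sign conventions in place; once it holds, the remainder is a routine scalar differential inequality. An alternative that avoids differentiating the minimum is a renormalization argument: test the cellwise inequality with $H'(r_n^K)$ for the convex function $H(x)=(\min\{x,0\})^2$, exploit the monotonicity of the viscous term together with the convexity of $H$ and the identity $xH'(x)=2H(x)$ to obtain $\tfrac{d}{dt}\int_Q H(r_n)\ddd\vx\le C_n\int_Q H(r_n)\ddd\vx$, and conclude $\int_Q H(r_n)\ddd\vx\equiv0$ by Gronwall, since it vanishes at $t=0$.
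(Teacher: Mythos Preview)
The paper does not actually supply a proof of this lemma; it merely states it as ``a slight modification of Lemma~10.2 of Ref.~\citenum{feireisl2021Book}'' and then moves on to apply it in the proof of Lemma~\ref{lem:density_lowerBound}. So there is no proof in the paper against which to compare yours line by line.

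That said, your argument is correct and is precisely the kind of discrete minimum-principle/comparison argument one expects behind such a citation. Testing with $\varphi_n=\ind{K}$, extracting the cellwise differential inequality, and then tracking $\omega(t)=\min_K r_n^K(t)$ is the standard route; your monotonicity computation $-F_{K,\sigma}\ge -r_n^K(\ava{\vu_n}\cdot\vn_{K,\sigma})$ at the minimizing cell is the crux and you have it right (both the upwind part, via $[\cdot]^-\le 0$ and $r_n^L\ge r_n^K$, and the viscous part). The renormalization alternative you sketch at the end is also valid and is in fact closer in spirit to how the cited book handles related entropy and positivity arguments. One small remark: your bound $\bar c\in L^1(0,T)$ tacitly uses that $\vu_n\in L^\infty((0,T)\times Q)$ for the fixed $n$ under consideration; this is not part of the lemma's hypotheses but holds in the setting where the lemma is applied, so the argument goes through.
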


	\begin{proof}[Poof of Lemma~\ref{lem:density_lowerBound}]
	As we assume in (ii) that $\liminf_n\varrho_{0,n} \geq \underline{\varrho}_0 > 0$, there exists $\underline{\varrho}(0)\in \reals_{>0}$ such that $\varrho_n(0,\vx) \geq  \underline{\varrho}(0)$ for all $\vx\in\Omega$ and $n$ sufficiently large. We define for $t>0$
	\begin{equation*}
		\underline{\varrho}(t) \coloneqq \underline{\varrho}(0) \exp\left( -Lt \right)
	\end{equation*}
	with $L\geq C\,M$. Using (\ref{eq:avaBoundByL1Norm}), we derive the following inequality for arbitrary nonnegative $\varphi_n\in \Qh(\Omega)$ 
	\begin{align*}
		&\frac{d}{dt} \int_\Omega \underline{\varrho} \varphi_{n} \ddd \vx
		-
		\sigsum \int_\sigma F_h^\text{up}[\underline{\varrho}, \vu_n]
		\jump{\varphi_n} \ddd S(\vx)
		\\
		&
		=
		- L \underline{\varrho} \int_\Omega \varphi_n \ddd \vx
		-
		\underline{\varrho}
		\sigsum \int_\sigma 
		\ava{\vu_n} \cdot \vn \jump{\varphi_n} 
		\ddd S(\vx)
		\\
		&
		=
		- L \underline{\varrho} \int_\Omega \varphi_n \ddd \vx
		+
		\underline{\varrho}
		\sigsum \int_\sigma 
		\jump{\vu_n} \cdot \vn \ava{\varphi_n} 
		\ddd S(\vx)
		\\
		&
		\leq
		\underline{\varrho} \int_\Omega \varphi_n \ddd \vx \left( -L + C M \right)
		\leq 0.
	\end{align*}
	Choosing $r_n = \varrho_n-\underline{\varrho}$ in Lemma~\ref{lem:superTransport}, we derive that $\varrho_n(t) \geq \underline{\varrho} \equiv \underline{\varrho}(T) >0$ for all $t\in [0,T]$. 
\end{proof}
Consequently, we get a uniform lower bound on $\vartheta_n$ for $n$ large enough due to Theorem~\ref{thm:propertiesDetScheme}~\ref{thm:propDetScheme_minEntropy}. Additionally, the bound of $\varrho \vu^2$, cf.~Theorem~\ref{thm:propertiesDetScheme}(iv)(c) and assumption \eqref{ass:det2}, yield uniform boundedness of $u_n(t)$ in $L^\infty(Q)$. 

With very similar arguments, it is straightforward to show an upper bound for the internal energy.

\begin{lemma}\label{lem:temp_upper_bound}
	Under the assumptions of Theorem~\ref{thm:deterministic_convergence} there exists $\overline{\varrho e}$ such that $\varrho_n e(\varrho_n, \vm_n, E_n) \leq\overline{\varrho e}$ for almost all $(t,\vx)$ and $n$ sufficiently large.
\end{lemma}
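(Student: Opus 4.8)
The plan is to upgrade the oscillation control contained in Assumptions~\ref{ass:det1}--\ref{ass:det3} to genuine $L^\infty$-bounds, feeding it into the integral estimates of Theorem~\ref{thm:propertiesDetScheme}. The workhorse, which I would isolate first, is the following elementary consequence of Lemma~\ref{lem:boundedness_of_approximationseq}: if $\vf_n\in Q_n(Q,\reals)$ satisfies $\vf_n\geq 0$, $\max_{\sigma\in\inFac_n}|\jump{\vf_n}|\leq C h_n$ and $\sup_n\|\vf_n\|_{L^1(Q)}<\infty$, then $\sup_n\|\vf_n\|_{L^\infty(Q)}<\infty$. Indeed, if $\vf_n$ took a value $A$ on some cell $K_0$ with $A$ much larger than $h_n$, Lemma~\ref{lem:boundedness_of_approximationseq} would force $\vf_n\geq A/2$ throughout $B_R(\vx_{K_0})\cap Q$ for some $R$ of order $A$; since $Q$ is bounded with Lipschitz boundary this region has measure at least $c_Q\min(R^d,|Q|)$, so $\|\vf_n\|_{L^1(Q)}\geq \tfrac12 A\, c_Q\min(R^d,|Q|)$, which contradicts the uniform $L^1$-bound once $A$ is large. (One should also observe that the jumps on exterior faces do not enter the scheme \eqref{eq:detDiscretization}, so that the interior-face bounds are what the argument needs.)

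Next I would collect what is already available. By the Remark, Assumption~\ref{ass:det2} in either of its forms gives $\max_\sigma|\jump{\vu_n}|\le C h_n$, so Lemma~\ref{lem:density_lowerBound} yields $\varrho_n\ge\underline\varrho>0$ for $n$ large, and then the minimal entropy principle of Theorem~\ref{thm:propertiesDetScheme} gives $\vartheta_n\ge\underline\vartheta>0$. Combining Assumption~\ref{ass:det1} with the uniform $L^\gamma$-bound on $\varrho_n$ from Theorem~\ref{thm:propertiesDetScheme}(iv) and the observation above produces a uniform upper bound $\varrho_n\le\overline\varrho$.

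It then remains to handle the two alternatives in Assumption~\ref{ass:det3}. If \ref{ass:det3}(ii) holds, I would use that $E_n=\varrho_n e(\varrho_n,\vm_n,E_n)+\tfrac12|\vm_n|^2/\varrho_n>0$ (Theorem~\ref{thm:propertiesDetScheme}(ii)) and that $\|E_n\|_{L^\infty(0,T;L^1(Q))}=\mathcal{E}_{0,n}$ is bounded (Theorem~\ref{thm:propertiesDetScheme}(iv)) to conclude $E_n\le\overline E$ via the observation; since $\varrho_n e(\varrho_n,\vm_n,E_n)=E_n-\tfrac12|\vm_n|^2/\varrho_n\le E_n$, the claim follows with $\overline{\varrho e}=\overline E$. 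If instead \ref{ass:det3}(i) holds, I would note $\varrho_n\vartheta_n=(\gamma-1)\,\varrho_n e(\varrho_n,\vm_n,E_n)\le(\gamma-1)E_n$, so $\|\vartheta_n\|_{L^1(Q)}\le(\gamma-1)\mathcal{E}_{0,n}/\underline\varrho$ is bounded, hence $\vartheta_n\le\overline\vartheta$ by the observation, and finally $\varrho_n e(\varrho_n,\vm_n,E_n)=c_v\,\varrho_n\vartheta_n\le c_v\,\overline\varrho\,\overline\vartheta=:\overline{\varrho e}$. The only genuinely new ingredient is the $L^1$-to-$L^\infty$ upgrade of the first paragraph, and that is where I expect to have to be a little careful (controlling the measure of the super-level set near $\partial Q$); everything else is bookkeeping with bounds already in hand. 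A more computational alternative, closer in spirit to the proof of Lemma~\ref{lem:density_lowerBound}, would be to extract the discrete internal-energy balance from the energy equation in \eqref{eq:detDiscretization}---its pressure-work and artificial-viscosity contributions being controlled by Assumption~\ref{ass:det2} and by $h_n^{\alpha}\to0$---and then compare $\varrho_n e(\varrho_n,\vm_n,E_n)$ with a time-growing supersolution via Lemma~\ref{lem:superTransport}.
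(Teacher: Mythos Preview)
Your main argument is correct and takes a genuinely different route from the paper. The paper does not spell out a proof but says only ``with very similar arguments'' to Lemma~\ref{lem:density_lowerBound}, i.e.\ it intends the transport-comparison approach you sketch at the very end: derive a discrete (sub/super)transport inequality for the internal energy from \eqref{eq:detDiscretization} and compare with an exponentially growing barrier via Lemma~\ref{lem:superTransport}. Your primary approach instead upgrades the uniform-in-time $L^1$ control from Theorem~\ref{thm:propertiesDetScheme}\ref{thm:propDetScheme_stability} to $L^\infty$ via the oscillation estimate of Lemma~\ref{lem:boundedness_of_approximationseq}, then chains the resulting pointwise bounds on $\varrho_n$, $E_n$ or $\vartheta_n$ to reach $\varrho_n e_n$. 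This is more elementary---it never touches the structure of the energy equation and requires no algebra on the scheme beyond what is already in hand---whereas the paper's route requires extracting an internal-energy balance and controlling its source terms (pressure work, artificial viscosity). On the other hand, the transport-comparison method would give an explicit exponential-in-$T$ bound and stays closer to the PDE heuristics. Two small points worth tightening in your write-up: make explicit that the jump bounds of Assumptions~\ref{ass:det1}--\ref{ass:det3} are uniform in $t\in[0,T]$ (so the $L^1$-to-$L^\infty$ step applies at every time level), and note that Lemmas~\ref{lem:jumpAdjacentElements}--\ref{lem:boundedness_of_approximationseq} indeed only use jumps across interior faces, so the hypotheses over $\inFac_n$ suffice.
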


In particular, Theorem~\ref{thm:propertiesDetScheme}~\ref{thm:propDetScheme_minEntropy} implies that there also exist a uniform upper bound on $\varrho_n$ for $n$ sufficiently large.

\subsubsection{Lipschitz continuous approximation of numerical solutions}

Next, we show that there exists a sequence of uniformly Lipschitz continuous approximations of the numerical solutions. To this end, we first note that the cell center values $(\varrho_n^K(t), \vm_n^K(t), E_n^K(t))$, $K\in\mathcal{T}_n$, are uniformly Lipschitz continuous in time with respect to both $K$ and $n$.

\begin{lemma}[Lipschitz continuity in time]\label{lem:LipschitzInTime}
	If assumptions \ref{ass:det1}, \ref{ass:det2} and \ref{ass:det3} are satisfied then $t\mapsto \varrho_n^K(t)$, $t\mapsto \vm_n^K(t)$ and $t\mapsto E_n^K(t)$ are globally Lipschitz continuous 
	 uniformly in $K\in \mathcal{T}_n$ and $n\in\naturals$.
\end{lemma}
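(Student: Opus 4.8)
The plan is to read off the time derivatives $\Dt\varrho_n^K$, $\Dt\vm_n^K$, $\Dt E_n^K$ directly from the semidiscrete scheme \eqref{eq:detDiscretization} by testing with the indicator function of a single cell, and then to bound each term uniformly in $K$ and $n$ using the a priori bounds and the discrete-derivative assumptions established above. First I would take $\varphi=\ind{K}$ in the density equation; since $\jump{\ind{K}}$ is supported on $\fac(K)$ and equals $\pm 1$ there, we get $|K|\,\Dt\varrho_n^K = \sum_{\sigma\in\fac(K)}\pm\int_\sigma \flux[\varrho_n,\vu_n]\ddS$. The upwind part $\UP[\varrho_n,\vu_n]$ is controlled by $\|\varrho_n\|_{L^\infty}\|\vu_n\|_{L^\infty}$, both uniformly bounded for $n$ large (upper bound on $\varrho_n$ from Lemma~\ref{lem:temp_upper_bound} plus the minimal entropy principle, and $\|\vu_n\|_{L^\infty}$ from the remark after Lemma~\ref{lem:density_lowerBound}); the artificial viscosity part $h_n^\epsilon\jump{\varrho_n}$ is $h_n^\epsilon\cdot O(h_n)=O(h_n^{1+\epsilon})$ by Assumption~\ref{ass:det1}, which is bounded since $\epsilon>-1$. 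Multiplying by $|\sigma|\le C_2 h_n^{d-1}$, summing over the finitely many faces of $K$, and dividing by $|K|\ge c_1 h_n^d$ yields $|\Dt\varrho_n^K|\le C(h_n^{-1}\cdot h_n^{d-1}\cdot(\text{const}+h_n^{1+\epsilon}))/h_n^d$... wait — here one must be careful: the flux integrand is an $O(1)$ quantity, $\int_\sigma$ contributes a factor $|\sigma|\sim h_n^{d-1}$, and dividing by $|K|\sim h_n^d$ leaves a factor $h_n^{-1}$. This apparent blow-up is exactly where the jump assumptions enter in a more refined way: one rewrites the flux sum over $\fac(K)$ using a discrete divergence/summation-by-parts identity so that the leading $O(1)$ contributions telescope, leaving genuine discrete-difference quantities $\jump{\varrho_n}$, $\jump{\vu_n}$, etc., each $O(h_n)$, so that the net bound is $O(h_n^{d-1}\cdot h_n)/h_n^d=O(1)$.

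More concretely, I would exploit that $\sum_{\sigma\in\fac(K)}\pm|\sigma|\vn_\sigma=0$ (the cell is closed), so that in $\sum_\sigma \pm\int_\sigma\UP[\varrho_n,\vu_n]\ddS$ one may subtract the constant $\varrho_n^K\,\ava{\vu_n}\cdot\vn$ "baseline"; what remains involves $\jump{\varrho_n}$ on each face (bounded by $Ch_n$ via Assumption~\ref{ass:det1}) and the variation of $\ava{\vu_n}\cdot\vn$ across the faces of $K$, which by Lemma~\ref{lem:jumpAdjacentElements} applied to $\vu_n$ — using Assumption~\ref{ass:det2}(i), or deriving it from (ii) via the remark — is also $O(h_n)$. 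For the momentum equation I test with $\ind{K}\ve{e}_i$ (which lies in $\mathcal D_n(\vm)$ for interior cells; boundary cells need the impermeability structure but the same counting applies), handling the convective flux $\flux[\vm_n,\vu_n]$ as above with $\|\vm_n\|_{L^\infty}$ bounded, the viscous term $h_n^{\alpha-1}\jump{\vu_n}$ which is $O(h_n^{\alpha-1}\cdot h_n)=O(h_n^\alpha)$ hence bounded since $\alpha>0$, and the pressure term $\ava{p_n}\vn\cdot\jump{\vphi}$, where again $\sum_\sigma\pm|\sigma|\vn_\sigma=0$ lets us replace $\ava{p_n}$ by $\ava{p_n}-p_n^K$ — the variation of $p_n$ across adjacent cells, controlled by the bounds on $\varrho_n$, $\vartheta_n$ (hence $p_n=\varrho_n\vartheta_n$) together with Assumptions~\ref{ass:det1} and \ref{ass:det3}. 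The energy equation is treated identically, with the extra term $h^{\alpha-1}\jump{\vu_n^2/2}=O(h_n^{\alpha-1}\cdot h_n\cdot\|\vu_n\|_{L^\infty})$ and the pressure-work terms $\ava{p_n}\jump{\vu_n\varphi}-\ava{p_n\varphi}\jump{\vu_n}$ reorganized so that only products of bounded quantities with $O(h_n)$ jumps of $p_n$ and $\vu_n$ survive after the $\sum_\sigma\pm|\sigma|\vn_\sigma=0$ cancellation.

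The main obstacle is precisely this bookkeeping: a naive estimate loses a power $h_n^{-1}$, so the proof hinges on performing the summation-by-parts cancellation carefully on each of the three equations and then invoking Lemma~\ref{lem:jumpAdjacentElements} (to pass from face-jump bounds to cell-to-cell difference bounds $|\vf_n^K-\vf_n^L|\le C'h_n$ for adjacent $K,L$) for every quantity appearing inside an average — $\varrho_n$, $\vu_n$, $p_n$, $E_n$ — so that after the cancellation every remaining term is a product of an $L^\infty$-bounded factor with a quantity of size $O(h_n)$, yielding $|\Dt\varrho_n^K|,|\Dt\vm_n^K|,|\Dt E_n^K|\le C$ with $C$ depending only on the constants in the a priori bounds and assumptions, not on $K$ or $n$. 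Integrating in time then gives the claimed uniform global Lipschitz continuity, and since the bound is global in $t\in[0,\infty)$ (the a priori bounds and assumptions are assumed for all times), the constant is genuinely uniform.
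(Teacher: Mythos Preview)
Your proposal is correct and follows essentially the same route as the paper: test the scheme with $\ind{K}$, observe that a naive bound loses a factor $h_n^{-1}$, and recover it via the divergence-theorem cancellation $\sum_{\sigma\in\fac(K)}|\sigma|\vn_K=0$, after which only products of $L^\infty$-bounded quantities with $O(h_n)$ jumps survive. The paper organizes the cancellation slightly more compactly by expanding each average as $\ava{a_n}=a_n^{\text{in}}+\tfrac12\jump{a_n}$ so that the product of the ``in'' parts is constant on $K$ and drops out directly, but the substance is the same as your baseline-subtraction argument.
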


\begin{proof}
	By definition, $\{\varrho_n^K, \vm_n^K, E_n^K\}_{K\in\mathcal{T}_n}$ form the solution of the system of ordinary differential equations (\ref{eq:detDiscretization}). 
	To prove uniform Lipschitz continuity, it is sufficient to bound the right hand side of the respective ordinary differential equation for each cell average  $\{\varrho_n^K, \vm_n^K, E_n^K\}_{K\in\mathcal{T}_n}$ in $L^\infty([0,T])$ for all $K\in\mathcal{T}_n$.

 	As the jumps of all quantities are bounded by $C\:h_n$ for some positive constant $C>0$ independent of $n$ and $K$, all terms containing a jump are bounded. For the other terms we note that
 	\begin{align*}
 		&\sum_{\sigma \in \fac(K)} \int_\sigma \ava{a_n} \ava{\vb_n} \cdot \vn_K \ddd S(\vx)
 		\\
 		&= 
 		\sum_{\sigma \in \fac(K)} \int_\sigma \left(a_n^\text{in} + \frac12 \jump{a_n}\right) \left(\vb_n^\text{in} + \frac12 \jump{b_n} \right) \cdot \vn_K \ddd S(\vx)
 		\\
 		&=  
 		\sum_{\sigma \in \fac(K)} \int_\sigma a_n^\text{in} \vb_n^\text{in}\cdot \vn_K \ddd S(\vx)
 		+ \mathcal{O}(h_n^d) = \mathcal{O}(h_n^d),
 	\end{align*}
 if both $\max_{\sigma\in \fac_n} |\jump{a_n}| = \mathcal{O}(h_n)$ and $\max_{\sigma\in \fac_n} |\jump{b_n}| = \mathcal{O}(h_n)$ for $a_n \in \Qh(Q)$ and $\vb_n \in \Qh(Q;\reals^d)$ with $\|a_n\|_{L^\infty}$ and $\|\vb_n\|_{L^\infty}$ bounded uniformly in $n$. And similarly,
 \begin{equation*}
 	\sum_{\sigma \in \fac(K)} \int_\sigma \ava{a_n} \vb \cdot \vn_K \ddd S(\vx)
 	 = 
 	 \mathcal{O}(h_n^d) 
 	 =
 	 \!\!\!
 	\sum_{\sigma \in \fac(K)} \int_\sigma  a \ava{\vb_n} \cdot \vn_K \ddd S(\vx)
\end{equation*}
 for $a\in \reals$ and $\vb\in\reals^d$. Therefore $\int_K \frac{d}{dt} f_n \ddd x = 
 \mathcal{O}(h_n^d) $ for $f\in \{\varrho,\vm, E\}$.
 
\end{proof}

Next, for fixed $t\in [0,T]$, we approximate the numerical solutions by uniformly Lipschitz continuous functions in space such that the approximation error vanishes in $L^\infty(Q)$ for $n\to\infty$.

\begin{lemma}\label{lem:LipApproximations}
	Let $\{\vf_n\}_{n\in\naturals}$ denote a sequence of functions $\vf_n\in \mathcal{Q}_n(Q,\reals^k)$ with $\max_{\sigma\in\fac_n } |\jump{\vf_n}| \leq Ch_n$ for some $C>0$. Then there exists a sequence $\hat \vf_n$ of uniformly Lipschitz continuous functions $\hat \vf_n \in C^{0,1}(Q,\reals^k)$ satisfying
	$\|\vf_n-\hat \vf_n\|_{L^\infty} \to 0$ for $n\to\infty$.
\end{lemma}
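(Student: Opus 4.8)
The plan is to take $\hat\vf_n$ to be a \emph{renormalised mollification} of $\vf_n$ at the mesh scale $h_n$. Fix once and for all a kernel $\rho\in C_c^\infty(B_1(0))$ with $\rho\ge 0$, $\int_{\reals^d}\rho\ddd\vz=1$ and $\rho\ge c_\rho>0$ on $B_{1/2}(0)$; set $\rho_n(\vz)\coloneqq h_n^{-d}\rho(\vz/h_n)$, $w_n(\vx)\coloneqq\int_Q\rho_n(\vx-\vy)\ddd\vy$ and
\begin{equation*}
	\hat\vf_n(\vx)\coloneqq\frac{1}{w_n(\vx)}\int_Q\vf_n(\vy)\,\rho_n(\vx-\vy)\ddd\vy,\qquad\vx\in Q .
\end{equation*}
Because $Q$ is a bounded Lipschitz domain it satisfies the measure density bound $|B_r(\vx)\cap Q|\ge c_Q r^d$ for all $\vx\in Q$, $0<r\le\diam Q$; hence $w_n\ge\theta_Q$ on $Q$ with $\theta_Q\coloneqq c_\rho c_Q 2^{-d}>0$, so $\hat\vf_n$ is well defined and, since $w_n\in C^\infty(Q)$ is strictly positive, $\hat\vf_n\in C^\infty(Q,\reals^k)$. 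Note that renormalised mollification fixes constants, i.e.\ $\widehat{\vf_n-c}=\hat\vf_n-c$ for every $c\in\reals^k$; this is used below.

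The first point to check is convergence in $L^\infty$. For a.e.\ $\vx\in Q$ and a.e.\ $\vy\in B_{h_n}(\vx)\cap Q$ the cells $K\ni\vx$, $L\ni\vy$ satisfy $\dist(K,L)\le|\vx-\vy|<h_n$, so Lemma~\ref{lem:boundedness_of_approximationseq} gives $|\vf_n(\vx)-\vf_n(\vy)|\le 2C''h_n$. Since $\hat\vf_n(\vx)-\vf_n(\vx)=w_n(\vx)^{-1}\int_Q(\vf_n(\vy)-\vf_n(\vx))\rho_n(\vx-\vy)\ddd\vy$ is a weighted average of these differences, $\|\hat\vf_n-\vf_n\|_{L^\infty(Q)}\le 2C''h_n\to 0$, which is the asserted convergence.

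The substantial point is the uniform Lipschitz bound. Fix $\vx\in Q$ and pick $K_0\in\mathcal{T}_n$ with $K_0\cap B_{h_n}(\vx)\ne\emptyset$ (e.g.\ the cell containing $\vx$); then $\dist(K_0,L)\le 2h_n$ for every cell $L$ meeting $B_{h_n}(\vx)$, so Lemma~\ref{lem:boundedness_of_approximationseq} gives $|\vf_n-\vf_n^{K_0}|\le 3C''h_n$ a.e.\ on $B_{h_n}(\vx)\cap Q$. Differentiating under the integral and using $\nabla\hat\vf_n=\nabla\widehat{(\vf_n-\vf_n^{K_0})}$ yields
\begin{equation*}
	\nabla\hat\vf_n(\vx)=\frac{1}{w_n(\vx)}\int_Q\big(\vf_n(\vy)-\vf_n^{K_0}\big)\otimes\nabla_{\vx}\rho_n(\vx-\vy)\ddd\vy-\frac{\widehat{(\vf_n-\vf_n^{K_0})}(\vx)\otimes\nabla w_n(\vx)}{w_n(\vx)} .
\end{equation*}
From $\|\vf_n-\vf_n^{K_0}\|_{L^\infty(B_{h_n}(\vx)\cap Q)}\le 3C''h_n$, $\int_{\reals^d}|\nabla\rho_n|\ddd\vz=h_n^{-1}\|\nabla\rho\|_{L^1}$, $|\widehat{(\vf_n-\vf_n^{K_0})}(\vx)|\le 3C''h_n$, $|\nabla w_n(\vx)|\le h_n^{-1}\|\nabla\rho\|_{L^1}$ and $w_n\ge\theta_Q$, both terms on the right are bounded by a constant depending only on $\rho$, $C''$ and $\theta_Q$; hence $L\coloneqq\sup_{n}\sup_{\vx\in Q}|\nabla\hat\vf_n(\vx)|<\infty$. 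Since any $\vx,\vy\in Q$ are joined inside $Q$ by a curve of length at most $C_Q\|\vx-\vy\|$ (the constant fixed before Lemma~\ref{lem:boundedness_of_approximationseq}), $\hat\vf_n$ is Lipschitz on $Q$ with constant at most $C_QL$ uniformly in $n$, so $\hat\vf_n\in C^{0,1}(Q,\reals^k)$.

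The main obstacle is the behaviour near $\partial Q$. A plain convolution is damped there by the factor $w_n<1$ and so does not converge to $\vf_n$, which forces the renormalisation; but the renormalised object has $|\nabla w_n|$, hence a priori $|\nabla\hat\vf_n|$, of size $\mathcal{O}(h_n^{-1})$. The decisive device is the local subtraction $\vf_n\mapsto\vf_n-\vf_n^{K_0}$: after it, only the $\mathcal{O}(h_n)$ oscillation of $\vf_n$ on $B_{h_n}(\vx)$ --- not $\|\vf_n\|_{L^\infty}$ --- multiplies the $\mathcal{O}(h_n^{-1})$ quantities $\nabla\rho_n$ and $\nabla w_n$, and the estimates close. (Equivalently, one may fix a Lipschitz partition of unity $\{\psi_K\}_{K\in\mathcal{T}_n}$ subordinate to $\mathcal{T}_n$ with $|\nabla\psi_K|\le c\,h_n^{-1}$ and uniformly bounded overlap, and take $\hat\vf_n\coloneqq\sum_K\vf_n^K\psi_K$; then $\sum_K\nabla\psi_K\equiv 0$ plays the role of the subtracted constant.)
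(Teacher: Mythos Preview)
Your proof is correct and takes a genuinely different route from the paper. The paper constructs $\hat\vf_n$ by \emph{nodal averaging plus piecewise-linear interpolation}: at each mesh vertex $\vx$ it sets $\hat f_n(\vx)$ to the average of the cell values $f_n^K$ over all simplices $K$ containing $\vx$, and then extends barycentrically on each simplex $K=\conv\{\vx_0,\dots,\vx_d\}$. Lemma~\ref{lem:jumpAdjacentElements} bounds nodal-value differences by $C'h_n$, and an explicit affine formula on each simplex gives the uniform Lipschitz constant $2dC'C_Q/c_T$ (with $c_T$ from shape regularity).

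Your construction instead uses a renormalised mollification at scale $h_n$ and controls $\nabla\hat\vf_n$ by the local-subtraction trick $\vf_n\mapsto\vf_n-\vf_n^{K_0}$, invoking Lemma~\ref{lem:boundedness_of_approximationseq} rather than Lemma~\ref{lem:jumpAdjacentElements}. What you gain: the approximants are $C^\infty$, not merely piecewise linear, and the argument is mesh-agnostic --- it does not use that the elements are simplices, only that $\vf_n$ has $\mathcal{O}(h_n)$ oscillation on balls of radius $h_n$, plus the measure-density and quasi-convexity of the Lipschitz domain $Q$. What the paper's approach gains: it is completely explicit in the simplicial setting, avoids the boundary analysis (no $w_n$, no measure-density constant $c_Q$), and the resulting $\hat\vf_n$ lives in the natural $P_1$ finite-element space on $\mathcal{T}_n$. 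Your parenthetical ``partition-of-unity'' alternative is in fact precisely the paper's construction in disguise: the barycentric hat functions $\psi_K$ form exactly such a Lipschitz partition of unity with $\sum_K\nabla\psi_K\equiv 0$.
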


\begin{proof}
	We may assume $k=1$ without loss of generality. For {\cb a node of the mesh $\vx$} we define $\hat f_n(\vx)$ as the average of $f_n^K$ for all $K$ with $\vx\in K$, i.e.
	\begin{equation*}
	\hat f_n(\vx) = (\#\{K\in \mathcal{T}_n\, \vx\in K\})^{-1}\sum_{K\in \mathcal{T}_n,\, \vx\in K} f_n^K.
	\end{equation*}
	Note that $|\vf_n^{K}-\vf_n^{L}| \leq {\cb C'} h_n$ for all $K,L$ with $\vx\in K \cap L$ due to Lemma~\ref{lem:jumpAdjacentElements}. In particular,
	\begin{equation*}
			|\hat \vf_n(\vx)-\vf_n^{K}| \leq {\cb C'} h_n \qquad \text{for all }K\text{ with }\vx\in K.
	\end{equation*}
	Fixing $K=\conv\{\vx_0,\dots, \vx_d\} \in\mathcal{T}_n$, we define $\hat f_n(\vx)$ for $\vx = \sum_i \lambda_i \vx_i \in K$ by $\hat f_n(\sum_i \lambda_i \vx_i) = \sum_i \lambda_i f_n(\vx_i)$. We may assume $\vx_0=\va_K$ and $\vx_i=h_n \matA_K \ve{e}_i+\va_K$ for $i=1,\dots,d$. 
	In particular,
	\begin{equation*}
		\hat f_n (\vx)|_K = h_n^{-1}(\hat f_n(\vx_1)-\hat f_n(\vx_0),\dots, \hat f_n(\vx_d)-\hat f_n(\vx_0))A_K^{-1} (\vx-\vx_0) + f_n(\vx_0).
	\end{equation*}
Thus, for each $\vx \in Q$ there exists a neighbourhood $U$ such that $\hat f_n|_U$ is Lipschitz continuous with Lipschitz constant less than or equal to $2 d {\cb C'}/c_T$. 
 Here, $c_T^2 \leq \lambda$ for all eigenvalues $\lambda$ of $\matA_K^T \matA_K$ for all $K\in\mathcal{T}_n$, $n\in\naturals$.
In particular, $\hat f_n$ is Lipschitz continuous on $\overline{Q}$ with Lipschitz constant less than or equal to $2 d {\cb C'} C_Q/c_T$, independent of $n$.
\end{proof}

In summary, the above two Lemmas imply that on $[0,T]\times \overline{Q}$ there exist a sequence of uniform globally Lipschitz continuous approximations of the VFV solutions such that the approximation error vanishes as $n \to \infty$.

\subsubsection{Proof of Theorem~\ref{thm:deterministic_convergence}}
We are now ready to prove  Theorem~\ref{thm:deterministic_convergence}.

\begin{proof}[Proof of Theorem~\ref{thm:deterministic_convergence}]
	Due to Lemmas~\ref{lem:density_lowerBound} and \ref{lem:temp_upper_bound}, $\varrho_n$ is bounded away from $0$ and $\vartheta_n$ is bounded from above. Moreover, according to Lemmas
	\ref{lem:LipschitzInTime} and \ref{lem:LipApproximations} there exits a sequence of uniformly Lipschitz continuous functions $\{\hat \varrho_n,\, \hat\vm_n,\,\hat{E}_n\}_{n=1}^\infty$ with
	\begin{equation*}
		\varrho_n-\hat{\varrho}_n\to 0,\qquad
		\vm_n-\hat{\vm}_n\to 0, \qquad 
		E_n-\hat{E}_n\to 0 \qquad \text{uniformly in }[0,T]\times \overline{Q}
	\end{equation*}
	for $n\to \infty$. Further, Lemma~\ref{lem:boundedness_of_approximationseq} implies the boundedness of $(\hat \varrho_n,\, \hat\vm_n,\,\hat{E}_n)$. Thus, by the Arzelà-Ascoli theorem,
	after possibly passing to a subsequence and relabeling the sequence, $(\hat \varrho_n,\, \hat\vm_n,\,\hat{E}_n)$ converges uniformly to the Lipschitz function $(\varrho, \vm, E)\in \Lip([0,T]\times \overline{Q};\reals^{d+2})$. Note that this convergence is pointwise a.e. and thus, the nonlinear terms in the momentum and energy equations converge as well. We will show that $(\varrho,\vm,E)$ is a strong solution of (\ref{eq:EulerSystem}) proving that passing to subsequences is not necessary. Choosing $\chi(x) = \min\{x, a\}$ for $a\in \reals$ large enough,
	Theorem 10.3 of Ref.~\citenum{feireisl2021Book} implies that
	 $(\varrho,\, \vm,\, E)$ is a Lipschitz continuous weak solution of the Euler system with density bounded away from $0$ and initial data $(\varrho_0,\, \vm_0,\, E_0)$. Lemma ~\ref{lem:uniquenessStrongSolutions} implies that the limit is unique and there is no need to take subsequences.

\end{proof}

 \section{The stochastic collocation viscous finite volume scheme}
 
Let  $[\Omega, \mathcal{B}(\Omega), \mathds{P}]$ be
a complete probability space with $\Omega$ a compact metric space of samples, $\mathcal{B}(\Omega)$  $\sigma$-algebra of Borel subsets of $\Omega$, and $\mathds{P}$  a complete Borel probability measure on $\Omega$. Let us denote by $D$ the set of data 
$D\equiv \{ (\varrho, \vm, E) \in  L^\gamma(Q;\reals) \times L^{\frac{2 \gamma}{\gamma +1}}(Q; \reals^d) \times L^1(Q;\reals)|\, {\rm{inf}}_{x \in Q} \varrho > 0, {\rm{inf}}_{x \in Q} \ E - \frac{1}{2} \frac{| \vm|^2 }{\varrho} > 0 \}. $
We assume that the randomness is enforced through the random initial data. This means  that 
the mapping
\begin{equation*}
 	\left( \varrho_0, \vm_0, E_0 \right)(\omega) \colon \Omega \longrightarrow D
 \end{equation*}
 is Borel measurable for a.a. $\omega \in \Omega.$

To approximate the random Euler system, we divide $\Omega$ into a sequence of finite partitions with shrinking maximal diameter. Having a piecewise constant deterministic approximation on $Q$, we also apply a piecewise constant approximation in the random space on $\Omega$. Other choices are possible, and we refer to Ref.~\citenum{CHKL}, where a higher order WENO approximation on $\Omega$ was successfully applied for hyperbolic conservation laws.
 The approximate random solutions are defined in the following way. 
 \begin{definition}[Approximate random solution]
 	Given a partition $\{\Omega_m^M\}_{m=1}^{\nu(M)}$ of $\Omega$ and a set of collocation nodes $\omega_m^M\in \Omega_m^M$ we define the approximate random solution of the random Euler equations as
 	\begin{align}
 		\varrho_n^M(t,\vx,\omega)
 		& =
 		\sum_{m=1}^{\nu(M)} \ind{\Omega_m}(\omega) \varrho_n(t,\vx,\omega_m)
 		\notag \\
 		\vm_n^M(t,\vx,\omega)
 		& =
 		\sum_{m=1}^{\nu(M)} \ind{\Omega_m}(\omega) \vm_n(t,\vx,\omega_m)
 		 \notag \\
 		E_n^M(t,\vx,\omega)
 		& =
 		\sum_{m=1}^{\nu(M)} \ind{\Omega_m}(\omega) E_n(t,\vx,\omega_m) \quad \mbox{ for } a.a.\  \omega \in \Omega,
 \label{randsol}
 	\end{align}
 	where $(\varrho_n(t,\vx,\omega_m), \vm_n(t,\vx,\omega_m),  E_n(t,\vx,\omega_m))$ denote the (deterministic) solution of the VFV method corresponding to the mesh $\mathcal{T}_n$ evaluated at $(t,x)$ with the initial data $(\varrho_0(\cdot,\omega_m), \vm_0(\cdot,\omega_m),  E_0(\cdot,\omega_m))$.
 \end{definition}
 
 To guarantee the convergence of $(\varrho_n^M(0,\cdot,\cdot), \vm_n^M(0,\cdot,\cdot),  E_n^M(0,\cdot,\cdot))$ in the limit as $M \to \infty ,n\to\infty$ we require that the initial data are admissible.
  
  \begin{definition}[Admissible random initial data]
 	The initial data $(\varrho_0,\, \vm_0,\, E_0) \colon \Omega \to D $
 are admissible, if 
 	\begin{enumerate}[label=(\roman*)]
 		\item 	there exist a.e.\ continuous functions $\underline{\varrho}\colon \Omega \to \reals_{>0}$ and $\cb \overline{E}\colon \Omega \to \reals_{>0}$ and $\underline{s}\colon \Omega \to \reals$ such that for all $\omega\in \Omega$ and $\vx\in Q$
 		\begin{equation*}
 			{
 				\arraycolsep=0.5mm
 				\begin{array}{rlr}
 					\displaystyle \essinf_{\vy}\varrho_0(\vy, \omega) &>\underline{\varrho}(\omega),\qquad
 					&
 					\displaystyle \esssup_{\vy}E_0(\vy, \omega) <\overline{E}(\omega),
 					\\[5mm]
 					\displaystyle e_0(\vx, \omega) & > 0,\qquad
 					&
 					\displaystyle \essinf_{\vy} s_0(\vy, \omega) > \underline{s}(\omega);
 				\end{array}
 			}
 		\end{equation*}
 	
 	\item
  	there exists a.e.\ continuous functions $r_\varrho,\, r_\vm,\, r_S \colon \Omega \to \reals_{\geq0}$ such that
 	\begin{equation*}
 		\|\varrho_0(\cdot,\omega)\|_{L^\gamma} \leq r_\varrho\,, \quad
 		\|\vm_0(\cdot,\omega)\|_{L^{2\gamma/(\gamma+1)}} \leq r_\varrho\,, \quad 
 		\|S_0(\cdot,\omega)\|_{L^\gamma} \leq r_S\,,
 	\end{equation*}
 where $S_0 = \varrho_0 s_0,$ $s_0 = s(\varrho_0, \vm_0, E_0)$;	
 	
 	\item the following maps from $\Omega$ to $\reals$  a.e.\ continuous:
 	\begin{align}\label{eq:RiemannIntegrabilityOfInitialData}
 		F_{\varrho,\varphi} \colon \omega\in \Omega &\mapsto \int_Q \varrho_0(\vx, \omega) \varphi(\vx) \ddd \vx, \qquad \varphi \in C_c^\infty(Q),
 		\notag
 		\\
 		F_{\vm,\varphi} \colon\omega\in \Omega &\mapsto \int_Q \vm_0(\vx, \omega) \cdot \vphi(\vx) \ddd \vx, \qquad \vphi \in C_c^\infty(Q;\reals^d),
 		\notag
 		\\
 		F_{S,\varphi} \colon\omega\in \Omega &\mapsto \int_Q S_0(\vx, \omega) \varphi(\vx) \ddd \vx, \qquad \varphi \in C_c^\infty(Q),
 		\notag
 		\\
 		F_{E} \colon\omega\in \Omega &\mapsto \int_Q E_0(\vx, \omega) \ddd \vx.
 	\end{align}
 	\end{enumerate}

 \end{definition}

We present the following result, which strengthens the convergence of the piecewise constant discretizations of the initial data.
\begin{lemma}\label{lem:ConvergenceOfInitialData}
	Let $(\varrho_0,\, \vm_0,\, E_0)$ be admissible random initial data. Then 
	\begin{align}\label{eq:initialDataLemma}
		\sum_{m=1}^{\nu(M)} \ind{\Omega_m}(\omega)\varrho_0(\cdot,\omega_m) &\to 	\varrho_0(\omega) 
		\text{ in }L^\gamma(Q; \reals), 
		\notag
		\\
		\sum_{m=1}^{\nu(M)} \ind{\Omega_m}(\omega)\vm_0(\cdot,\omega_m) &\to \vm_0(\omega)
		\text{ in }L^{2\gamma/(\gamma+1)}(Q;\reals^d),
		\notag
		\\
		\sum_{m=1}^{\nu(M)} \ind{\Omega_m}(\omega)E_0(\cdot,\omega_m) &\to E_0(\omega) 
		\text{ in }L^1(Q; \reals) \quad \mbox{ as } M \to \infty, \mathds{P}-a.s.
	\end{align}
\end{lemma}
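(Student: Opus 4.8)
The plan is to prove each of the three convergences separately and to reduce all of them to a single abstract principle: if $g \colon \Omega \to X$ is a bounded, $\mathds{P}$-a.e.\ continuous map into a separable Banach space $X$, and $\{\Omega_m^M\}$ is a sequence of partitions of the compact metric space $\Omega$ with $\max_m \diam(\Omega_m^M) \to 0$ as $M \to \infty$, then the piecewise-constant interpolants $g^M(\omega) := \sum_m \ind{\Omega_m^M}(\omega) g(\omega_m^M)$ converge to $g(\omega)$ in $X$ for $\mathds{P}$-a.e.\ $\omega$. First I would establish this principle: fix $\omega$ a continuity point of $g$; for $M$ large the cell $\Omega_{m(M)}^M$ containing $\omega$ also contains the node $\omega_{m(M)}^M$, and since $\diam(\Omega_{m(M)}^M) \to 0$ we have $\omega_{m(M)}^M \to \omega$, whence $g^M(\omega) = g(\omega_{m(M)}^M) \to g(\omega)$ by continuity of $g$ at $\omega$. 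The set of continuity points has full measure by hypothesis, so this gives $\mathds{P}$-a.s.\ convergence, and no boundedness is even needed for the pointwise statement — the uniform bound will be used only to make the subsequent identification of limits clean.

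Next I would verify that the three data maps $\omega \mapsto \varrho_0(\cdot,\omega)$, $\omega \mapsto \vm_0(\cdot,\omega)$, $\omega \mapsto E_0(\cdot,\omega)$ actually satisfy the hypotheses of this principle with $X = L^\gamma(Q;\reals)$, $X = L^{2\gamma/(\gamma+1)}(Q;\reals^d)$, $X = L^1(Q;\reals)$ respectively. These spaces are separable. Boundedness is immediate from part (ii) of the admissibility definition: $r_\varrho, r_\vm, r_S$ are finite a.e.\ continuous functions on the compact set $\Omega$, hence essentially bounded, so the norms $\|\varrho_0(\cdot,\omega)\|_{L^\gamma}$ etc.\ are bounded for $\mathds{P}$-a.e.\ $\omega$; for the momentum and energy norms one uses the standard interpolation bound $\|E\|_{L^1} \lesssim \|\varrho\|_{L^\gamma} + \||\vm|^2/\varrho\|_{L^1}$ together with the positivity of the internal energy, or simply combines (i) and (ii). The genuine content is \emph{strong} ($L^q$-norm) a.e.\ continuity of these maps, whereas part (iii) of the admissibility definition only provides \emph{weak} continuity — continuity of $\omega \mapsto \int_Q \varrho_0(\vx,\omega)\varphi(\vx)\ddd\vx$ for every test function $\varphi$, and analogously for $\vm_0$ and $S_0 = \varrho_0 s_0$.

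This upgrade from weak to strong continuity is the main obstacle, and it is exactly why the lemma "strengthens" the hypotheses. The argument I would use is the Radon–Riesz-type property: in $L^p(Q)$ for $1 < p < \infty$, weak convergence together with convergence of norms implies strong convergence, and since $Q$ is bounded, weak continuity against $C_c^\infty(Q)$ plus a uniform norm bound gives full weak continuity (by density of $C_c^\infty$ in the dual $L^{p'}$). For the density one combines weak $L^\gamma$-continuity from $F_{\varrho,\varphi}$ with continuity of $\omega \mapsto \|\varrho_0(\cdot,\omega)\|_{L^\gamma}$, which one extracts from the a.e.\ continuity of the entropy-weighted quantities; here the minimal-entropy bound $s_0 \geq \underline{s}(\omega)$ from (i), the positivity of $e_0$, and the relation $S_0 = \varrho_0 s_0 = \varrho_0(\frac{1}{\gamma-1}\log((\gamma-1)e_0) - \log\varrho_0)$ let one recover $\|\varrho_0\|_{L^\gamma}$ from $\|\varrho_0\|_{L^\gamma}$ and $\|S_0\|_{L^\gamma}$ in a continuous way, giving the needed norm continuity. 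For $\vm_0 \in L^{2\gamma/(\gamma+1)}$ with exponent $> 1$ (as $\gamma > 1$) the same Radon–Riesz argument applies verbatim. The energy case is the easiest: $F_E$ already gives continuity of $\omega \mapsto \int_Q E_0 \ddd\vx = \|E_0(\cdot,\omega)\|_{L^1}$ since $E_0 > 0$, and for the $L^1$ strong convergence of the interpolants one argues by writing $E_0 = \varrho_0 e_0 + \tfrac12|\vm_0|^2/\varrho_0$, handling $\varrho_0 e_0$ via the entropy bound (which controls it in $L^\gamma \hookrightarrow L^1$ on the bounded domain and yields equi-integrability) and $|\vm_0|^2/\varrho_0$ via the lower density bound $\varrho_0 \geq \underline{\varrho}(\omega)$ and the $L^{2\gamma/(\gamma+1)}$ control of $\vm_0$, so that strong $L^1$-continuity of $E_0$ follows from that of its pieces. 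With strong a.e.\ continuity in hand, the abstract principle from the first paragraph delivers all three displays in \eqref{eq:initialDataLemma}, $\mathds{P}$-almost surely.
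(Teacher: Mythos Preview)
Your overall architecture is sound and close in spirit to the paper's: both reduce to showing that weak convergence of the interpolants (which follows directly from the a.e.\ continuity of $F_{\varrho,\varphi}$, $F_{\vm,\vphi}$, $F_{S,\varphi}$, $F_E$ plus the uniform bounds) can be upgraded to strong convergence in the respective $L^p$ spaces. The difference is only where the upgrade is performed --- you do it at the level of the data map $\omega\mapsto(\varrho_0,\vm_0,E_0)(\cdot,\omega)$, the paper does it directly on the sequence of interpolants --- and either framing would be fine.

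The genuine gap is the mechanism you invoke for the upgrade. Radon--Riesz requires convergence of norms, and the hypotheses do \emph{not} give you continuity of $\omega\mapsto\|\varrho_0(\cdot,\omega)\|_{L^\gamma}$ or $\omega\mapsto\|\vm_0(\cdot,\omega)\|_{L^{2\gamma/(\gamma+1)}}$. Your sentence ``recover $\|\varrho_0\|_{L^\gamma}$ from $\|\varrho_0\|_{L^\gamma}$ and $\|S_0\|_{L^\gamma}$'' is circular as written, and in any case you only have \emph{weak} continuity of $S_0$, not norm continuity. Likewise your $E_0$ argument presupposes strong continuity of $\varrho_0$ and $\vm_0$, which is what you are trying to prove. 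The correct tool --- and this is what the paper uses, citing Chaudhuri --- is the \emph{strict convexity of the total energy} as a function of $(\varrho,\vm,S)$: weak convergence of $(\varrho_0^M,\vm_0^M,S_0^M)$ together with convergence of the scalar $\int_Q E_0^M\,\mathrm{d}\vx=\int_Q E(\varrho_0^M,\vm_0^M,S_0^M)\,\mathrm{d}\vx$ (from $F_E$) forces equality in the weak lower semicontinuity inequality for the strictly convex functional $\int_Q E$, and a Visintin/Brezis--type argument then yields strong convergence of all three quantities simultaneously. You can plug this same convexity argument into your framework (apply it along any sequence $\omega_n\to\omega$ of continuity points to get strong $L^p$-continuity of the data map, then run your interpolation principle), but Radon--Riesz on its own will not close the argument.
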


\begin{proof}
	Let $\{\varphi_k\}_{k=1}^\infty\subset C^\infty_c(Q)$ and $\{\vphi_k\}_{k=1}^\infty\subset C^\infty_c(Q;\reals^d)$ denote dense sequences in the respective space of smooth functions. As the initial data are admissible, there exists a set $\mathcal{N} \subset \Omega$ with $\mathds{P}(\mathcal{N}) = 0$ such that the maps in (\ref{eq:RiemannIntegrabilityOfInitialData}) are continuous in each point of the complement $\mathcal{N}^c$ of $\mathcal{N}$ for test functions $\varphi_k$ and $\vphi_k$ .
	Clearly, in every point of continuity $\omega$ of $F\colon \Omega\to \reals$,
	\begin{equation*}
		\sum_{m=1}^{\nu(M)} \ind{\Omega_m^M}(\omega) F(\omega_m^M) \to F(\omega).
	\end{equation*}
	This implies that for $M\to \infty $ we have
	\begin{align*}
		\int_Q \sum_{m=1}^{\nu(M)} \ind{\Omega_m}(\omega)\varrho_0(\vx,\omega_m) \varphi_k(\vx) \ddd \vx &\to \int_Q	\varrho_0(\omega) \varphi_k(\vx) \ddd \vx
		, 
		\notag
		\\
		\int_Q \sum_{m=1}^{\nu(M)} \ind{\Omega_m}(\omega)\vm_0(\vx,\omega_m) \cdot \vphi_k(\vx) \ddd \vx &\to \int_Q  \vm_0(\omega)  \cdot \vphi_k(\vx) \ddd \vx
		,
		\notag
		\\
		\int_Q \sum_{m=1}^{\nu(M)} \ind{\Omega_m}(\omega)S_0(\vx,\omega_m) \varphi_k(\vx) \ddd \vx &\to \int_Q  S_0(\omega) \varphi_k(\vx) \ddd \vx
		,
		\\
		\int_Q \sum_{m=1}^{\nu(M)} \ind{\Omega_m}(\omega)E_0(\vx,\omega_m) \ddd \vx &\to \int_Q E_0(\vx, \omega)  \ddd \vx
	\end{align*}
for all $\omega \in \mathcal{N}^c$. 

Now, let us use the following notation 
\begin{equation}
\label{tildeeq}
{f}_0^M(\omega) = \sum_{m=1}^{\nu(M)} \ind{\Omega_m}(\omega) f_0(\cdot,\omega_m)
\end{equation}
for any 
$f_0\in\{ \varrho_0, \vm_0, E_0, S_0 \}$. 
The uniform bounds of  ${\{{\varrho}_0^M, {\vm}_0^M, {S}_0^M \}}_{M=1}^\infty$ in $L^\gamma(Q)\times L^{2\gamma/(\gamma+1)}(Q;\reals^d)\times L^\gamma(Q)$ imply weak convergence of $({\varrho}_0^M,  {\vm}_0^M,  {S}_0^M)$ as $M\to\infty$. Following similar arguments as in 
Section~6 of Ref.~\citenum{Chaudhuri2021} we apply the convexity arguments for $E(\varrho_0, \vm_0, S_0)$ and  obtain  the strong convergence claimed in  (\ref{eq:initialDataLemma}). 
\end{proof}

Lemma~\ref{lem:ConvergenceOfInitialData} yields $\mathds{P}$-a.s.\ convergence
of discrete admissible random initial data $(\varrho_0^M,  \vm_0^M,  S_0^M)$ as $M\to \infty$. To discretize the initial data on $Q$, we apply the projection operator 
\begin{equation*}
	\Pi_n  \colon L^p(Q) \to L^p(Q), \qquad f \mapsto \sum_{K\in\mathcal{T}_n}  \frac{\ind{K}}{|K|} \int_K f(\vy)\ddd \vy.
\end{equation*}
Since the projection error is controlled in the $L^p$-norm, we also get a.e.\ convergence in $Q$ of fully discretized initial data. Choosing the space discretization parameter $n=n(M)$, such that $n \to \infty$ as $M \to \infty$, we now denote for any $f_0\in\{ \varrho_0, \vm_0, E_0,  S_0 \}$ its piecewise constant projection by
\begin{equation*}
	f_{0,n}^M(\omega) = \sum_{m=1}^{\nu(M)} \ind{\Omega_m}(\omega) \Pi_n f_0(\cdot,\omega_m).
\end{equation*}

\begin{lemma}\label{lem:fullyDiscreteInitialDataConvergence}
		Let $(\varrho_0,\, \vm_0,\, E_0)$ be admissible random initial data. Then there exists a Borel measurable map {\cb
	\begin{equation*}
		\omega\in \Omega \mapsto (\varrho_0^*(\omega),\, \vm^*_0(\omega),\, E_0^*(\omega)) \in D
	\end{equation*} 
	such that
	\begin{equation*}
		\varrho_0 = \varrho^*_0, \quad \vm_0 = \vm^*_0, \quad E_0 = E^*_0 \qquad \mathds{P}-\mbox{a.s.}
	\end{equation*}
	and
	\begin{align}\label{eq:aeConvergenceOfFullyDiscreteInitialData}
		&\varrho_{0,n}^M(\omega) \to  \varrho^*_0(\omega) \text{ in } L^\gamma(Q),
		\quad 
		\vm_{0,n}^M(\omega) \to  \vm^*_0(\omega) \text{ in } L^{2\gamma/(\gamma+1)}(Q;\reals^d),
		\notag
		\\&
		\quad
		E_{0,n}^M(\omega) \to E^*_0(\omega) \text{ in } L^1(Q) \quad \mbox{ as } M, n=n(M) \to \infty  \qquad  \mathds{P}-\mbox{a.s.}
	\end{align}
}
	\noindent Further, for all ${ n\in\naturals}$ $M\in\naturals$, $m\leq \nu(M)$ and all $\omega\in \Omega_m^M$ 
	\begin{align}\label{eq:discreteInitialDataProperties}
		&\essinf_{\vx\in Q} \varrho^M_{0, n}(\vx,\omega){ \geq\underline{\varrho}(\omega_m^M)}>0,\qquad
		&&{\esssup_{\vy} E^M_{0, n}(\vy, \omega) \leq \overline{E}({\cb \omega_m^M})},
		\notag
		\\
		&
		\essinf_{\vx\in Q} e^M_{0, n}(\vx,\omega)>0,
		\qquad
		&&\essinf_{\vy} s^M_{0, n}(\vy, \omega) > \underline{s}(\omega_m^M).
	\end{align}
\end{lemma}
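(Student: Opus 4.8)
The plan is to treat the two assertions of the lemma separately: the $\mathds{P}$-a.s.\ convergence \eqref{eq:aeConvergenceOfFullyDiscreteInitialData} together with the existence of the genuinely Borel representative $(\varrho_0^*,\vm_0^*,E_0^*)$, and the cell-wise bounds \eqref{eq:discreteInitialDataProperties}. Set $X\coloneqq L^\gamma(Q)\times L^{2\gamma/(\gamma+1)}(Q;\reals^d)\times L^1(Q)$ and, for $f_0\in\{\varrho_0,\vm_0,E_0\}$, recall $f_0^M$ from \eqref{tildeeq} and the fully discrete $f_{0,n}^M$ defined just before the lemma. For $\omega\in\Omega_m^M$ one splits $f_{0,n}^M(\omega)-f_0(\omega)=\big(\Pi_n f_0(\cdot,\omega_m^M)-f_0(\cdot,\omega_m^M)\big)+\big(f_0^M(\omega)-f_0(\omega)\big)$. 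By Lemma~\ref{lem:ConvergenceOfInitialData} the second summand tends to $0$ in the relevant $L^p$-norm for $\mathds{P}$-a.e.\ $\omega$, and the same lemma gives $f_0(\cdot,\omega_m^M)\to f_0(\omega)$ in $L^p(Q)$. Since $\Pi_n$ is an $L^p$-contraction and $\Pi_n g\to g$ in $L^p(Q)$ for every fixed $g$, the elementary estimate $\|\Pi_{n(M)}f_0(\cdot,\omega_m^M)-f_0(\cdot,\omega_m^M)\|\le 2\|f_0(\cdot,\omega_m^M)-f_0(\omega)\|+\|\Pi_{n(M)}f_0(\omega)-f_0(\omega)\|$ shows the first summand vanishes as $M,n(M)\to\infty$. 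Hence $f_{0,n(M)}^M(\omega)\to f_0(\omega)$ in $X$ for $\mathds{P}$-a.e.\ $\omega$.

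To produce the Borel representative I would argue as follows. For each $M$ the map $\omega\mapsto(\varrho_{0,n(M)}^M,\vm_{0,n(M)}^M,E_{0,n(M)}^M)(\omega)$ is an $X$-valued simple function, constant on the Borel cells $\Omega_m^M$, hence Borel measurable. Consequently the set $\mathcal{C}\subset\Omega$ on which this sequence is Cauchy in $X$ is Borel, its pointwise limit $\ell\colon\mathcal{C}\to X$ is Borel, and $\mathcal{C}'\coloneqq\mathcal{C}\cap\ell^{-1}(D)$ is Borel because $D$ is a Borel subset of $X$ (it is a countable union of closed subsets of $X$). Fixing some $\bar u\in D$, I set $(\varrho_0^*,\vm_0^*,E_0^*)\coloneqq\ell$ on $\mathcal{C}'$ and $\coloneqq\bar u$ off $\mathcal{C}'$; this map is Borel and $D$-valued. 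By the first paragraph and the a.s.\ membership $f_0(\omega)\in D$ we have $\mathds{P}(\mathcal{C}')=1$ and $\ell=(\varrho_0,\vm_0,E_0)$ on a full-measure subset of $\mathcal{C}'$, so $(\varrho_0^*,\vm_0^*,E_0^*)=(\varrho_0,\vm_0,E_0)$ $\mathds{P}$-a.s.; and \eqref{eq:aeConvergenceOfFullyDiscreteInitialData} follows, since on $\mathcal{C}'$ the limit is $(\varrho_0^*,\vm_0^*,E_0^*)$ by construction.

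For \eqref{eq:discreteInitialDataProperties}, fix $n,M,m$ and $\omega\in\Omega_m^M$ and denote by $\bar\varrho_K,\bar\vm_K,\bar E_K$ the constant values on $K\in\mathcal{T}_n$ of $\varrho_{0,n}^M(\cdot,\omega),\vm_{0,n}^M(\cdot,\omega),E_{0,n}^M(\cdot,\omega)$, i.e.\ the Lebesgue averages over $K$ of $\varrho_0(\cdot,\omega_m^M),\vm_0(\cdot,\omega_m^M),E_0(\cdot,\omega_m^M)$. Monotonicity of averaging and the admissibility of the random data give at once $\bar\varrho_K\ge\underline{\varrho}(\omega_m^M)>0$ and $\bar E_K\le\overline{E}(\omega_m^M)$. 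For the internal energy and the entropy I would use that $(\varrho,\vm,E)\mapsto E-|\vm|^2/(2\varrho)$ and $(\varrho,\vm,E)\mapsto\varrho\,s(\varrho,\vm,E)$ are concave on $\{\varrho>0,\ E-|\vm|^2/(2\varrho)>0\}$ (the latter being the negative of the classical convex entropy of the Euler system), so that Jensen's inequality over $K$ yields
\begin{align*}
\bar E_K-|\bar\vm_K|^2/(2\bar\varrho_K)&\;\geq\;\frac{1}{|K|}\int_K\big(E_0-|\vm_0|^2/(2\varrho_0)\big)(\vy,\omega_m^M)\,\ddd\vy\;>\;0,
\\
\bar\varrho_K\,s(\bar\varrho_K,\bar\vm_K,\bar E_K)&\;\geq\;\frac{1}{|K|}\int_K(\varrho_0 s_0)(\vy,\omega_m^M)\,\ddd\vy\;\geq\;\underline{s}(\omega_m^M)\,\bar\varrho_K ,
\end{align*}
where I used $S_0=\varrho_0 s_0$ and the pointwise bounds $e_0>0$, $s_0>\underline{s}(\omega_m^M)$ from the admissibility of the random data. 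Dividing both lines by $\bar\varrho_K>0$ gives $e_{0,n}^M>0$ and $s_{0,n}^M>\underline{s}(\omega_m^M)$ on each $K$; since $\mathcal{T}_n$ is finite, the essential infima over $\vx$ are attained as minima over $K$ and all four inequalities in \eqref{eq:discreteInitialDataProperties} follow, strictness being inherited from the strict essential bounds on the random data.

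I expect the analytic content to be light and the main obstacle to be the measure-theoretic bookkeeping in the second step — verifying that $D$ is Borel in $X$ and that the pointwise limit genuinely takes values in $D$; on the estimate side the only non-mechanical point is recognizing that positivity of $e_{0,n}^M$ and the minimal-entropy bound survive the piecewise-constant projection precisely because $\varrho e$ and $\varrho s$ are concave in the conservative variables, which is what lets Jensen's inequality close the argument.
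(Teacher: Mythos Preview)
Your argument is correct and close in spirit to the paper's, but two steps are handled differently and are worth noting. For the convergence \eqref{eq:aeConvergenceOfFullyDiscreteInitialData}, both you and the paper reduce to Lemma~\ref{lem:ConvergenceOfInitialData} plus the strong convergence $\Pi_n\to\mathrm{id}$ on $L^p$; the paper proves the latter inline via a smooth approximant $g$, whereas you invoke it directly together with the contraction property of $\Pi_n$ --- the content is the same. Your construction of the Borel representative is more explicit than the paper's one-line appeal to ``pointwise limits of Borel maps are Borel'': you isolate the Borel Cauchy set, check that $D$ is $F_\sigma$ in $X$, and redefine on the complement, which is the cleaner way to guarantee a genuinely $D$-valued Borel map. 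For the entropy bound in \eqref{eq:discreteInitialDataProperties} the paper argues via convexity of $\varrho\mapsto\varrho^\gamma$ together with the pointwise relation $\varrho_0^\gamma\le(\gamma-1)e^{-\underline{s}/c_v}\varrho_0 e_0$ and then Jensen for the concave internal energy, while you use directly that $(\varrho,\vm,E)\mapsto\varrho\,s(\varrho,\vm,E)$ is concave (equivalently, $-\varrho s$ is the convex mathematical entropy) and apply Jensen once; your route is shorter and yields the strict inequality $s_{0,n}^M>\underline{s}(\omega_m^M)$ immediately from $s_0>\underline{s}$ and $\varrho_0>0$.
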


\begin{proof}
	{\cb
		As the pointwise limit of a Borel measurable map ranging in a metric space is again Borel measurable, it suffices to prove (\ref{eq:aeConvergenceOfFullyDiscreteInitialData}) with $\varrho_0$, $\vm_0$, $E_0$ instead of $\varrho^*_0$, $ \vm^*_0$, $E^*_0$. This will then immediately imply the existence of $\varrho^*_0$, $\vm^*_0$, $E^*_0$. }

To show  (\ref{eq:aeConvergenceOfFullyDiscreteInitialData}) let us consider  $f_0 \in \{\varrho_0,\, \vm_0,\,E_0\}$.	
	We fix an arbitrary positive $\varepsilon>0$ and choose $g\in C^\infty_c(Q)$ with $\|f_0 - g\|_{L^p} \leq \varepsilon$ for the corresponding $p \in \{\gamma, 2 \gamma / (\gamma + 1),1\}$. 
Thus,	using the notation from \eqref{tildeeq}  and Jensen's inequality we derive 
\begin{align*}
		&\|  f_{0,n}^M - f_0\|_{L^p} \leq 
		\| \Pi_n [{f}_0^M - f_0] \|_{L^p(Q)}
		+
		\| \Pi_n [f_0 - g] \|_{L^p(Q)}
		\\ 
&+	\| \Pi_n [g] - g \|_{L^p(Q)}
		+
		\| g -f_0 \|_{L^p(Q)}
		\\
		&\leq 
		\|{f}_0^M - f_0\|_{L^p} + 2 \| g -  {f}_0 \|_{L^p} + \| \Pi_n [g] - g \|_{L^p}
	\end{align*}
with the last term vanishing for $n(M)\to\infty$ due to the smoothness of $g$.  

For every point of continuity $\omega$ of $\underline{\varrho}$, $\overline{e}$ and $\underline{s}$, the properties listed in (\ref{eq:discreteInitialDataProperties}) follow from the respective properties of admissible random initial data.
The positivity of $\varrho_{0,n}^M$ and $\vartheta_{0,n}^M$ follows immediately from the properties of admissible random initial data. Likewise, for every point of continuity $\omega$ of $\underline{\varrho}$, $\overline{\varrho}$ and $\underline{s}$ the properties listed in (\ref{eq:discreteInitialDataProperties}) follow from the respective properties of admissible random initial data. Here, the lower bound on the density and upper bound on the energy  are obvious. For the positivity of the initial internal energy and for the lower bound on the initial entropy, we use Jensen's inequality. More precisely, as $(\varrho,\vm) \mapsto \vm^2/\varrho$ is convex on $\reals^d \times (0,\infty)$, we find that 
\begin{align*}
	&\varrho_{0,n}^M (\omega,\cdot) e_{0,n}^M(\omega,\cdot) 
	\\
	&= \sum_{m=1}^{\nu(M)} \ind{\Omega_m}(\omega) \bigg( \Pi_n[E_0(\omega_m)] - \Pi_n[\vm_0(\omega_m)]^2/\Pi_n[\varrho_0(\omega_m)] \bigg)
	\\&
	\geq
	\sum_{m=1}^{\nu(M)} \ind{\Omega_m}(\omega) \bigg( \Pi_n[E_0(\omega_m)] - \Pi_n\Big[\vm_0(\omega_m)^2/\varrho_0(\omega_m)\Big] \bigg) >0.
\end{align*}
Similarly, as $\varrho \mapsto \varrho^{\gamma}$ is convex and $(\varrho, \vm, E) \mapsto E - \vm^2/\varrho$ is concave,
\begin{align*}
	&\Pi_n\Big[\varrho_0(\omega_m)\Big]^\gamma 
	\leq 
	\Pi_n\Big[\varrho_0(\omega_m)^\gamma\Big]
	\\
	&
	\leq 
	\Pi_n\Big[(\gamma-1)\exp(-\underline{s}/c_v)(E_0(\omega_m)-\vm_0(\omega_m)^2/\varrho_0(\omega_m))\Big]
	\\
	&
	\leq
	(\gamma-1)\exp(-\underline{s}/c_v)\Big(\Pi_n[E_0(\omega_m)] - \Pi_n[\vm_0(\omega_m)]^2/\Pi_n[\varrho_0(\omega_m)] \Big).
\end{align*}
Thus
\begin{equation*}
	s_{0,n}^M(\omega, \cdot) = c_v \log\left( (\gamma -1)\frac {\varrho_{0,n}^M(\omega,\cdot) e_{0,n}^M(\omega,\cdot)}{\varrho_{0,n}^M(\omega,\cdot)^{\gamma }} \right) \geq \underline{s}(\omega_m)
\end{equation*}
for $m$ with $\omega\in \Omega_m^M$. Using the a.e.\ continuity of $\underline{\varrho}$, $\overline{\varrho}$ and $\underline{s}$ in $\omega$ we conclude the statement of the lemma.
\end{proof}

 \subsection{Main result: convergence}
 
This section is devoted to our main result:  convergence of the stochastic collocation VFV method applied to the random Euler equations.
 Our working hypothesis will be that Assumptions~\ref{ass:det1} - \ref{ass:det3} hold in probability.
 
 \begin{theorem}[Convergence in probability]\label{thm:conv_in_probability}
 	Let the initial data  \linebreak $(\varrho_0, \vm_0, E_0)$ be admissible random data. Let the family of sets
 	$(\Omega_m^M)_{m=1,  M \in \naturals}^{\nu(M)}$ satisfy \  $\Omega_m^M\cap \Omega_\ell^M = \emptyset \ \ \forall \ell \neq m$,
 	\begin{equation*}
 		\max_{m=1,\dots, \nu(M)}\diam (\Omega_m^M) \to 0 \mbox{ as } M \to \infty,
 		\quad 
 		\bigcup_{m=1}^{\nu(M)} \Omega_m^M = \Omega.
 	\end{equation*}
 	Let $h_n >0$  and  $h_n \to 0$ for $n=n(M) \to \infty$. Finally, let the solutions ${\{\varrho_{n(M)}^{M},\vm_{n(M)}^{M},E_{n(M)}^{M} \}}_{M=1}^\infty$ obtained by the stochastic collocation VFV method \eqref{randsol} have bounded discrete gradients in probability. More precisely, this means that for each $\varepsilon>0$ there exists $N=N(\varepsilon)$ such that
 	\begin{equation}\label{eq:stcoasticJumpAssumption}
 		\limsup\limits_{M,n(M)\to\infty}\mathds{P}\left(\max_{\sigma\in \inFac}\max\left\{
 		\Big|\jump{\varrho_{n}^{M}}\Big|,  \|\jump{\vu_{n}^{M}}\|, 
 		\Big|\jump{E_{n}^{M}}\Big| \right\}/h_n 
 		\geq N\right) \leq \varepsilon.
 	\end{equation}
 	Then for  $ M \to \infty, n(M) \to \infty $
 	\begin{eqnarray*} 		
 &&(\varrho_{n(M)}^M, \vm_{n(M)}^M, E_{n(M)}^M) \to (\varrho, \vm, E) \text{ strongly in } \\ 
 && L^q((0,T); L^\gamma (Q) \times L^{\frac{2 \gamma}{\gamma +1}} (Q; \reals^d) \times L^1(Q)), 
 		\text{ in probability,} \ 1 \leq q < \infty.	
  	\end{eqnarray*}
 \end{theorem}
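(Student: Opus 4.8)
Write $X \coloneqq L^\gamma(Q) \times L^{2\gamma/(\gamma+1)}(Q;\reals^d) \times L^1(Q)$ and $\mathcal X \coloneqq L^q((0,T);X)$, a separable Banach (hence Polish) space for each fixed $q\in[1,\infty)$, and set $U_n^M \coloneqq (\varrho_n^M,\vm_n^M,E_n^M)$, $W_n^M \coloneqq (\varrho_{0,n}^M,\vm_{0,n}^M,E_{0,n}^M)$ and $G_n^M \coloneqq \max_{\sigma\in\inFac}\max\{|\jump{\varrho_n^M}|,\|\jump{\vu_n^M}\|,|\jump{E_n^M}|\}/h_n$; the scheme parameters are the fixed $\alpha\in(0,4/3)$, $\epsilon>-1$ required by Theorem~\ref{thm:deterministic_convergence}. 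Since uniqueness of strong solutions (Theorem~\ref{lem:uniquenessStrongSolutions}) forces any two subsequential limits to agree, it suffices to prove that along every sequence $M_\ell\to\infty$ (with $n = n(M_\ell)$) the random variable $U_{n(M_\ell)}^{M_\ell}$ converges in probability in $\mathcal X$. The first and main step is to show that the laws of $(U_{n(M_\ell)}^{M_\ell}, W_{n(M_\ell)}^{M_\ell}, G_{n(M_\ell)}^{M_\ell})$ on $\mathcal X \times X \times [0,\infty]$ form a tight family. Tightness of the $W$-marginal is immediate from Lemma~\ref{lem:fullyDiscreteInitialDataConvergence}, since $\mathds P$-a.s.\ convergence implies convergence of laws. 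For the $U$-marginal, fix $\varepsilon>0$: choose $N = N(\varepsilon)$ as in \eqref{eq:stcoasticJumpAssumption} and, by inner regularity of $\mathds P$, a compact $\Omega_\varepsilon\subset\Omega$ with $\mathds P(\Omega\setminus\Omega_\varepsilon)\le\varepsilon$ avoiding the discontinuity sets of the a.e.-continuous data bounds $\underline\varrho,\overline E,\underline s,r_\varrho,r_\vm,r_S$, so these are continuous and bounded on $\Omega_\varepsilon$. On $\Omega_\varepsilon\cap\{G_n^M<N\}$ the per-element a priori estimates apply with constants depending only on $N$ and these uniform bounds: Lemma~\ref{lem:density_lowerBound} and Lemma~\ref{lem:temp_upper_bound} give uniform lower bounds on $\varrho_n^M$ and upper bounds on $\varrho_n^M e(\varrho_n^M,\vm_n^M,E_n^M)$, so together with Theorem~\ref{thm:propertiesDetScheme} all fields are bounded in $L^\infty$, Lemma~\ref{lem:LipschitzInTime} gives uniform Lipschitz continuity in time, and Lemmas~\ref{lem:LipApproximations}, \ref{lem:boundedness_of_approximationseq} give uniformly Lipschitz-in-space approximations at $L^\infty$-distance $\le C N h_n$. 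Since on each fixed coarse mesh the piecewise-constant fields lie in a \emph{finite-dimensional} space and are bounded (the latter already from Theorem~\ref{thm:propertiesDetScheme}(iv) on $\Omega_\varepsilon$), a standard total-boundedness argument combined with Arzel\`a--Ascoli shows that $\{U_n^M : \omega\in\Omega_\varepsilon,\ G_n^M<N\}$ stays in a fixed compact subset of $\mathcal X$. Hence $\mathds P(U_{n(M_\ell)}^{M_\ell}\in\mathcal K_\varepsilon)\ge 1-2\varepsilon$ for $\ell$ large and a suitable compact $\mathcal K_\varepsilon\subset\mathcal X$, which gives tightness.

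With tightness in hand I would invoke the Gy\"ongy--Krylov criterion: it suffices that for any two subsequences the joint laws of $(U_{n(M_{\ell_k})}^{M_{\ell_k}}, U_{n(M_{\ell'_k})}^{M_{\ell'_k}})$ on $\mathcal X\times\mathcal X$ have a further subsequence converging weakly to a measure supported on the diagonal. By Prokhorov, pass to a weakly convergent sub-subsequence of the enlarged vector that also records the two families of discrete initial data and the two discrete gradients; since all spaces involved are Polish, Skorokhod's representation theorem (classical version) provides a probability space $(\tilde\Omega,\tilde{\mathcal B},\tilde{\mathds P})$ carrying copies $(\tilde U_k,\tilde U'_k,\tilde W_k,\tilde W'_k,\tilde G_k,\tilde G'_k)$ with the same joint laws and converging $\tilde{\mathds P}$-a.s. to a limit $(\tilde U,\tilde U',\tilde W,\tilde W',\tilde G,\tilde G')$. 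Because Lemma~\ref{lem:fullyDiscreteInitialDataConvergence} gives $\mathds P$-a.s.\ convergence of the discrete initial data to $(\varrho_0,\vm_0,E_0)$ along \emph{both} subsequences, their joint law is asymptotically concentrated on the diagonal, so $\tilde W = \tilde W' =: \tilde U_0$ $\tilde{\mathds P}$-a.s.

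It remains to identify $\tilde U$ and $\tilde U'$ as strong solutions with data $\tilde U_0$. Since $G_n^M$ is a Borel function $\Phi_n^M$ of $U_n^M$ (it merely reads off normalized jumps of a piecewise-constant field), equality of laws yields $\tilde G_k = \Phi_{n(M_{\ell_k})}^{M_{\ell_k}}(\tilde U_k)$ $\tilde{\mathds P}$-a.s. From \eqref{eq:stcoasticJumpAssumption}, $\limsup_k\tilde{\mathds P}(\tilde G_k\ge N)\le\varepsilon(N)$ with $\varepsilon(N)\to 0$; using $\{\tilde G\ge N\}\subseteq\liminf_k\{\tilde G_k\ge N-1\}$ and Fatou's lemma gives $\tilde{\mathds P}(\tilde G\ge N)\le\varepsilon(N-1)$, hence $\tilde G<\infty$ $\tilde{\mathds P}$-a.s.; a convergent sequence of reals being bounded, $\sup_k\tilde G_k(\tilde\omega)<\infty$ for $\tilde{\mathds P}$-a.e.\ $\tilde\omega$, i.e.\ Assumptions~\ref{ass:det1}--\ref{ass:det3} hold pathwise for the sequence $\{\tilde U_k(\tilde\omega)\}_k$. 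Adjoining the scalar quantities appearing in \eqref{eq:discreteInitialDataProperties} to the Skorokhod vector likewise shows that $\{\tilde W_k(\tilde\omega)\}_k$ is admissible discrete initial data in the sense of Definition~\ref{def:admissInitData} for $\tilde{\mathds P}$-a.e.\ $\tilde\omega$. Theorem~\ref{thm:deterministic_convergence} applied pathwise then shows $\tilde U_k(\tilde\omega)$ converges in $\mathcal X$ to the \emph{unique} strong solution of \eqref{eq:EulerSystem} with data $\tilde U_0(\tilde\omega)$; since also $\tilde U_k(\tilde\omega)\to\tilde U(\tilde\omega)$ in $\mathcal X$ by Skorokhod, $\tilde U(\tilde\omega)$ equals that strong solution, and likewise $\tilde U'(\tilde\omega)$ is the strong solution with the \emph{same} data. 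By Theorem~\ref{lem:uniquenessStrongSolutions}, $\tilde U=\tilde U'$ $\tilde{\mathds P}$-a.s., so the limiting joint law lives on the diagonal. Gy\"ongy--Krylov now yields convergence of $U_{n(M_\ell)}^{M_\ell}$ in probability in $\mathcal X$; running the same reasoning for a single subsequence identifies the limit $(\varrho,\vm,E)$ as the pathwise strong solution with data $(\varrho_0,\vm_0,E_0)$ (in particular a Borel $D$-valued map, as an in-probability limit of such), independent of the subsequence, so the whole family converges as $M,n(M)\to\infty$.

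The conceptual crux is that \eqref{eq:stcoasticJumpAssumption} controls the discrete gradients only \emph{in probability}, so Theorem~\ref{thm:deterministic_convergence} cannot be applied pathwise on $\Omega$; the Skorokhod representation is exactly what converts this into a pathwise a.s.\ bound, via the elementary observation that an a.s.\ limit of a tight family of $[0,\infty]$-valued discrete gradients is a.s.\ finite, hence the family is a.s.\ bounded. I expect the main technical obstacle to be the tightness step: the constants in the deterministic a priori estimates depend on $\omega$ through the a.e.-continuous data bounds and must be made uniform on a large compact subset of $\Omega$, and the samples on which the discrete-gradient bound fails (small probability, but no compactness there) are handled by simply placing them outside the compact set.
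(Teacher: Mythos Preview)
Your overall architecture---tightness, Skorokhod representation, pathwise application of the deterministic convergence theorem, Gy\"ongy--Krylov---is exactly what the paper does, and the identification of the limit via uniqueness of strong solutions is the same. The one substantive difference is the topology in which you establish tightness.

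You work directly in the strong target space $\mathcal X = L^q((0,T);X)$ and therefore need the discrete-gradient hypothesis \emph{already for tightness}: you restrict to $\Omega_\varepsilon \cap \{G_n^M < N\}$, invoke the uniform-Lipschitz approximations of Lemmas~\ref{lem:LipschitzInTime}--\ref{lem:LipApproximations}, and assemble a compact set via Arzel\`a--Ascoli plus finite-dimensionality on coarse meshes. The paper instead obtains tightness in the \emph{weak} space $W^{-K,2}((0,T)\times Q)$ using only the stability bounds of Theorem~\ref{thm:propertiesDetScheme}\ref{thm:propDetScheme_stability} and the compact embedding $L^1 \hookrightarrow W^{-K,2}$ (via the adjoint of $W^{K,2}_0 \hookrightarrow C$). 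This makes tightness nearly free---no gradient control, no case-splitting in $M$, no Arzel\`a--Ascoli---at the price that Skorokhod only delivers a.s.\ convergence in $W^{-K,2}$. That is harmless: once Theorem~\ref{thm:deterministic_convergence} is applied pathwise on the Skorokhod space, it yields convergence in $\mathcal X$ anyway, and the $W^{-K,2}$ limit and the $\mathcal X$ limit must agree. So the weak-topology route buys a much cleaner tightness step, while your strong-topology route is more self-contained but does considerably more work up front; you correctly flagged this as the main technical obstacle. One further point you should make explicit (the paper does, via its set $A$): the transfer of the numerical scheme to the Skorokhod copies relies on the fact that for each fixed $k$ the law of $(U_k,W_k)$ is supported on finitely many points (one per collocation cell), each of which satisfies \eqref{eq:detDiscretization}, so the same holds $\tilde{\mathds P}$-a.s.\ for $(\tilde U_k,\tilde W_k)$.
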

 \begin{proof}
 	For $K>0$ large enough, there is a compact embedding from $W^{K,2}_0$ to $C(\overline{(0,T)\times Q})$, see e.g.\ Theorem {\cb 6.3} of Ref.~\citenum{adams2003sobolev}. Schauder's theorem implies that the adjoint of the above embedding is compact as well. Since
 	Theorem~\ref{thm:propertiesDetScheme}~\ref{thm:propDetScheme_stability} implies boundedness in probability of the sequence of approximate solutions in $L^q((0,T);  L^\gamma(Q) \times L^{2 \gamma / (\gamma +1)}(Q; \reals^d) \times L^1(Q))$,
 	the sequence is tight in $W^{-K,2}((0,T)\times Q; \reals^{d+2})$ for  $K$ enough large. 
 	 
 	Further, defining
 	\begin{equation*}
 		C^{\cb M}(\omega) = \max_{\sigma\in \inFac,\, \cb t}\max\left\{
 		\Big|\,\jump{\varrho_{n(M)}^M}\,\Big|, \;  \left\|\,\jump{\vu_{n(M)}^M}\,\right\|, \;
 		\Big|\,\jump{E_{n(M)}^M}\,\Big| \right\}/h_{n\cb(M)},
 	\end{equation*}
 assumption (\ref{eq:stcoasticJumpAssumption}) implies tightness of $C^{\cb M}$ in $\reals$.
 	Finally, by Lemma~\ref{lem:ConvergenceOfInitialData}, assumptions on admissible  initial data
 	imply convergence of $(\varrho_{0,n}^M, \vm_{0,n}^M, E_{0,n}^M)$ in $L^\gamma(Q) \times L^{2\gamma/(\gamma + 1)}(Q)\times L^1(Q)$ as $M,n(M) \to \infty$ $\mathds{P}$-a.s.
Consequently, the initial data also converge in law and the application of the Prokhorov theorem, Theorem 5.2 of Ref.~\citenum{billingsley1999}, yields tightness of the initial data.

Let us  define the following separable Banach space (i.e.~in particular a Polish space)
 	\begin{align*}
 		X = 
 		&\left( L^\gamma(Q) \right)^3 
 		\times \left(L^{2\gamma/(\gamma + 1)}(Q;\reals^d) \right)^3
 		\times  \left( L^1(Q) \right)^3 
 		\times \left( W^{-K,2}((0,T)\times Q) \right)^2 \\
 		&
 		\times \left( W^{-K,2}((0,T)\times Q;\reals^d) \right)^2
 		\times \left( W^{-K,2}((0,T)\times Q) \right)^2
 		\times \reals^{\cb 2}.
 	\end{align*}

 Given two arbitrary but fixed subsequences $\{M_{1,k}\}_{k\in\naturals}$ and $\{M_{2,k}\}_{k\in\naturals}$ of $\{M\}_{M\in\naturals}$, we define the sequence $\{U_k\}_{k\in\naturals}$ ranging in $X$ by
 	\begin{align*}
 		U_k = \Big(
 		&\varrho_0, \, \varrho_{0}^{M_{1,k}}, \, \varrho_0^{M_{2,k}},\,
 		\vm_0, \, \vm_0^{M_{1,k}}, \, \vm_0^{M_{2,k}},\,
 		E_0, \, E_0^{M_{1,k}}, \, E_0^{M_{2,k}},
 		\\
 		& 
 		{\cb \varrho^{M_{1,k}}, \, \varrho^{M_{2,k}}, \, 
 		\vm^{M_{1,k}}, \, \vm^{M_{2,k}}, \, 
 		E^{M_{1,k}}, \, E^{M_{2,k}},} \, 
 		C^{\cb M_{1,k}},{\cb C^{M_{2,k}} }\Big),
 	\end{align*}
 	{\cb where we dropped all subscripts $n(M_{l,k})$, $l=1,2$, indicating the mesh resolution in space}.
  	The Prohorov theorem implies convergence in law of $\{U_k\}_{k\in\naturals}$ in $X$. Using the Skorokhod representation theorem, Section 3.1.1 of Ref.~\citenum{skorokhod1956LimitTheorems}, possibly going to a subsequence and relabeling the sequence, we find that there exist $\tilde U_k \colon [0,1] \to X$ and $\tilde U \colon [0,1] \to X$,
  	\begin{align*}
  		\tilde U_k &= \Big(
  		\tilde \varrho_0^{0,k}, \, \tilde \varrho_0^{1,k}, \, \tilde \varrho_0^{{2,k}},\,
  		\tilde \vm_0^{0,k}, \, \tilde \vm_0^{{1,k}}, \, \tilde \vm_0^{{2,k}},\,
  		\tilde E_0^{0,k}, \, \tilde E_0^{{1,k}}, \, \tilde E_0^{{2,k}},\,
  		\\
  		& \phantom{= \Big( \qquad \qquad \qquad}
  		\tilde \varrho^{{1,k}}, \, \tilde \varrho^{{2,k}}, \, 
  		\tilde \vm^{{1,k}}, \, \tilde \vm^{{2,k}}, \, 
  		\tilde E^{{1,k}}, \, \tilde E^{{2,k}}, \, 
  		{\cb \tilde C^{{1,k}}, \, \tilde C^{{2,k}}}
  		\Big),
  		\\
  		\tilde U &= \Big(
  		\tilde \varrho_0^{0}, \enspace \tilde \varrho_0^{1}, \enspace \tilde \varrho_0^{2},\enspace
  		\tilde \vm_0^0, \enspace \tilde \vm_0^{1}, \enspace \tilde \vm_0^{2},\enspace
  		\tilde E_0^0, \enspace \tilde E_0^{1}, \enspace \tilde E_0^{2},\enspace
  		\\
  		& \phantom{= \Big( \qquad \qquad \qquad}
  		\tilde \varrho^{1}, \enspace \tilde \varrho^{2},\enspace
  		\tilde \vm^{1}, \enspace \tilde \vm^{2}, \enspace
  		\tilde E^{1}, \enspace \tilde E^{2}, \enspace
  		{\cb \tilde C^1, \, \tilde C^2}
  		\Big),
  	\end{align*}
  such that  $\tilde U_k(\tilde \omega) \to  \tilde U(\tilde \omega)$ in $X$ as $k \to \infty$ for a.a.  $\tilde \omega \in [0,1]$ and 
  \begin{equation}\label{eq:equalityOfLaws}
  	\mathds{P}(U_k \in A) = \lambda(\tilde{U}_k \in A) \qquad \text{for all }k\in\naturals,\, A\in \mathcal{B}(X).
  \end{equation}
  Note that in the original sequence $U_k$, the initial data {\cb $\varrho_0$, $\vm_0$, and $E_0$} did not depend on $k$. The corresponding entries $$\tilde \varrho_0^{0,k},\, \tilde \vm_0^{0,k},\, \tilde E_0^{0,k}$$ of $\tilde U_k$ however might depend on $k$ and need not be equal to the corresponding entries of $\tilde U$. The transformation of, e.g.,\ $\varrho_0$ to the map $\tilde \varrho_0^{0,k}$ from $[0,1]$ to $L^\gamma(Q)$ might be different for each $k$.

{\cb	Admissibility of the initial data can be shown by including $\underline{\varrho}^M(\omega) =  \sum_{m=1}^M \ind{\Omega_m^M}(\omega) \underline{\varrho}(\omega_m^M)$ and the similar defined $\overline{E}^M$ and $\underline{s}^M$ in $U_k$. We skip these details and refer the extended version of this article~\cite{extendedPaper} for the full proof.}
  
  For $i\in \{1,\dots, \nu(M_{1,k})\}$, $j\in \{1,\dots, \nu(M_{2,k})\}$ let $\omega \in \Omega_i^{M_{1,k}} \cap \Omega_j^{M_{2,k}}$. Clearly, for every fixed $\omega$ the set $A\subset X$ given by,
  \begin{align*}
 & A \coloneqq 
  \Bigg\{ 
  \Big(
  x,\, \varrho_0^{M_{1,k}}(\omega), \, \varrho_0^{M_{2,k}}(\omega),\,
  y, \, \vm_0^{M_{1,k}}(\omega), \, \vm_0^{M_{2,k}}(\omega),\,
  z, \, E_0^{M_{1,k}}(\omega), 
 \\&  E_0^{M_{2,k}}(\omega),\,
  \varrho_{n(M_{1,k})}^{M_{1,k}}(\omega), \, \varrho^{M_{2,k}}_{n(M_{2,k})}(\omega), \,
  \vm^{M_{1,k}}_{n(M_{1,k})}(\omega), \, \vm^{M_{2,k}}_{n(M_{2,k})}(\omega), \, 
  E^{M_{1,k}}_{n(M_{1,k})}(\omega), \,
  \\
  &  E^{M_{2,k}}_{n(M_{2,k})}(\omega), \,
  	C^{\cb M_{1,k}}(\omega),\,{\cb C^{M_{2,k}} }(\omega)
  \Big)
  \setsep x\in L^\gamma,\, y\in L^{2\gamma/(\gamma+1)}, \, z\in L^1
	\Bigg\},
\end{align*}
is Borel measurable and thus there exists $\tilde{\Omega}^k_{i,j} = (\tilde U^k)^{-1}(A)\subset [0,1)$ with $\lambda(\tilde{\Omega}^k_{i,j}) = \mathds{P}(\cb U_k^{-1} U_k(\Omega_i^{M_{1,k}} \cap \Omega_j^{M_{2,k}}))$. In particular, all components of $\tilde U_k$ other than $\tilde \varrho_0^{0,k}$, $\tilde \vm_0^{0,k}$, $\tilde E_0^{0,k}$
are a.e. equal to piecewise constant functions. As a consequence, $(\tilde \varrho^{1,k}, \tilde \vm^{1,k}, \tilde E^{1,k})$ satisfy the numerical scheme (\ref{eq:detDiscretization}) for the initial data $(\tilde\varrho_0^{1,k}, \tilde\vm_0^{1,k}, \tilde E_0^{1,k})$   a.e. in  $[0,1]$. The analogous statement is true for $(\tilde \varrho^{2,k}, \tilde \vm^{2,k}, \tilde E^{2,k}).$
  
Now, we fix $\tilde \omega \in [0,1]$ such that both $(\tilde \varrho^{1,k}, \tilde \vm^{1,k},\tilde E^{1,k})(\tilde \omega)$ and $(\tilde \varrho^{2,k}, \tilde \vm^{2,k}, \tilde E^{2,k})(\tilde \omega)$ are solutions of the numerical scheme (\ref{eq:detDiscretization}) and such that $\tilde U_k(\tilde\omega) \to \tilde U(\tilde\omega)$ in $X$.
As a consequence $(\tilde\varrho_0^{1,k}(\tilde\omega),\tilde\vm_0^{1,k}(\tilde\omega), \tilde E_0^{1,k}(\tilde\omega))$ and $(\tilde\varrho_0^{2,k}(\tilde\omega), \tilde\vm_0^{2,k}(\tilde\omega), \tilde E_0^{2,k}(\tilde\omega))$ converge in $L^\gamma(Q) \times L^{2\gamma/ (\gamma +1)}(Q; \reals^d)\times L^1(Q)$. Moreover, $\{\cb \tilde C^{l,k}(\tilde \omega)\}_{k\in\naturals}$ converges and thus is bounded. 
{\cb By Theorem~\ref{thm:deterministic_convergence}}
the limits $(\tilde{\varrho}^\ell$, $\tilde{\vm}^\ell$, $\tilde{E}^\ell)$  are  strong solutions of the Euler equations corresponding to the initial data $(\tilde{\varrho}_0^\ell$, $\tilde{\vm}_0^\ell$, $\tilde{E}_0^\ell)$ for $\ell=1,2$.
More precisely, for $k \to \infty$
\begin{eqnarray*}
&& (\tilde \varrho^{\ell,k}, \tilde \vm^{\ell,k},\tilde E^{\ell,k})(\tilde \omega)  \to    
 (\tilde{\varrho}^\ell, \tilde{\vm}^\ell, \tilde{E}^\ell)(\tilde \omega) \\
&&\mbox{ in } L^q((0,T); L^\gamma(Q) \times L^{2 \gamma/(\gamma + 1)}(Q; \Bbb R^d)
\times L^1 (Q)), \  1 \leq q < \infty.
\end{eqnarray*}
As the strong solution is unique, see Theorem\ref{lem:uniquenessStrongSolutions}, we only need to show that the initial data  $\tilde f^{1}(0,\cdot)$ and $\tilde f^{2}(0,\cdot)$ coincide for $f=\varrho, \, \vm,\, E$ to apply the Gyöngy-Krylov lemma, Lemma 1.1 of Ref.~\citenum{gyoengy1996existence}.

From now on we denote by $f_0$ either $\varrho_0$, $\vm_0$ or $E_0$.
Note that $f_0^{M_{1,k}}$ and $f_0^{M_{2,k}}$ converge $\mathds{P}$-a.s. to $f_0$ in the respective $L^p$ space. In particular, by Egorov's theorem, see Theorem 5.1.4 of Ref.~\citenum{malliavin1995}, there exist sets $A_k\in\mathcal{B}(\Omega)$ and an increasing sequence $\{n_k\}_{k\in\naturals}$ such that $\mathds{P}(A_k)< 2^{-k}$ and $\|f_0(\omega) - f_0^{M_{n_k,\ell}}(\omega)\|_{L^p} < 1/k$ for all $\omega\in A_k$ and  $\ell=1,2$.  In particular, after possibly going to subsequences and relabeling them, we find that 
\begin{equation*}
	\mathds{P}\left( \bigcup_m \bigcap_{k\geq m}\bigg\{ \|\tilde f_0^{\ell,k} -f_0^{0,k} \|<\epsilon  \bigg\}   \right) = 1, \quad \ell=1,2.
\end{equation*}
Here we have used that the laws of $\tilde U_k$ and $U_k$ coincide. Thus, the Gyöngy-Krylov lemma implies that 
\begin{eqnarray*}
&&(\varrho_{n(M)}^{M},  \vm_{n(M)}^{M}, E_{n(M)}^{M})  \to (\varrho, \vm, E) \\
&&\mbox{ in } L^q((0,T); L^\gamma(Q)\times L^{2\gamma/(\gamma+1)}(Q;\reals^d) \times L^1(Q)), \ 1 \leq q < \infty,
\end{eqnarray*}
for $M \to \infty, n(M) \to \infty $ in probability.
Applying again the Skorokhod theorem, but now for the sequence 
\begin{equation*}
	\Bigg \{\varrho_0, \varrho_0^M, \varrho_{n(M)}^{M}, \varrho, 
	\vm_0, \vm_0^M, \vm_{n(M)}^{M}, \vm, 
	E_0, E_0^M, E_{n(M)}^{M}, E,
	C^M \Bigg\}_{M = 1}^\infty 
\end{equation*}
we find that $(\varrho,\vm, E)$ is the random strong solution of the Euler equations with the initial data  $(\varrho_0, \vm_0, E_0)$. This concludes the proof.
\end{proof}

\section*{Acknowledgement}
This work was supported by the Gutenberg Research College and by
		the Deutsche Forschungsgemeinschaft (DFG, German Research Foundation) 
		project number 525853336 -- SPP 2410 ``Hyperbolic Balance Laws: Complexity, Scales and Randomness''.
		The authors also thank the  Mainz Institute of Multiscale Modelling for supporting their research.

\bibliographystyle{ws-procs9x6}

\end{document}